\numberwithin{equation}{section}
\newcommand{\Rr}{\mathbb{R}}
\newcommand{\Cc}{\mathbb{C}}
\renewcommand{\epsilon}{\varepsilon}
\newcommand*{\norm}[1]{\left\lVert#1\right\rVert}
\DeclareMathOperator{\e}{e}
\newcommand{\integ}[3]{%
\ensuremath{\displaystyle{\int^{#2}_{#1} #3}}}
\newcommand{\somme}[3]{%
\ensuremath{\displaystyle{\sum^{#2}_{#1} #3 }}}
\theoremstyle{plain}
\newtheorem{theo}{Theorem}[section]
\newtheorem{lem}[theo]{Lemma}
\newtheorem{prop}[theo]{Proposition}
\newtheorem{coro}[theo]{Corollary}
\theoremstyle{definition}
\newtheorem{rem}[theo]{Remark}
\begin{document}
\author{Julien Dambrine \and Morgan Pierre \and Germain Rousseaux}
\title[A determination of optimal ship forms]{A theoretical and numerical determination of optimal ship forms based on Michell's wave resistance}

\begin{abstract}We determine the parametric hull  of a given volume which minimizes the total water resistance for a given speed of the ship. The total resistance is  the sum of Michell's wave resistance and of the  viscous resistance,  approximated by assuming a constant viscous drag coefficient.  We prove that the optimized hull exists, is unique, symmetric, smooth and that  it depends continuously on the speed. Numerical simulations show the efficiency of the approach, and complete the theoretical results.   
\end{abstract} 
\address{Julien Dambrine and Morgan Pierre, 
Universit\'e de Poitiers,
Laboratoire de Ma\-th\'e\-ma\-tiques et Applications UMR CNRS 7348,
 T\'el\'eport 2 - BP 30179,
 Boulevard Marie et Pierre Curie,
86962 Futuroscope Chasseneuil,
France.}
\address{Germain Rousseaux, Institut Pprime UPR 3346,
D\'epartement Fluides, Thermique, Combustion,
CNRS - Universit\'e de Poitiers - ENSMA,
SP2MI - T\'el\'eport 2,
11 Boulevard Marie et Pierre Curie, BP 30179,
86962 Futuroscope Chasseneuil Cedex,
France.}
%
\maketitle
\noindent

{\bf Keywords:} Quadratic programming, obstacle problem, Sobolev space, Uz\-a\-wa algorithm. 


\section{Introduction}

 The resistance of water to the motion of a ship is  traditionally represented as the sum of  two terms,   the wave resistance and the viscous resistance (which corresponds itself to the sum of the frictional and eddy resistance). Michell's thin-ship theory~\cite{Michell,Michelsen}  provides an explicit formula of  the wave resistance for a given speed and for a hull expressed in parametric form, with parameters in the region of the plane of symmetry. It is therefore a natural question    to  search the hull of a given volume which minimizes Michell's wave resistance for a given speed. Unfortunately, this problem is known to be ill-posed~\cite{Kostyukov,Sretensky_1936}: it is underdetermined, so that additional constraints should be imposed in order to provide a  solution.  The latter approach has been successfully performed by several authors,  from a theoretical and computational point of view, starting in the 1930's with Weinblum (see~\cite{Weinblum} and references in~\cite{Kostyukov}), Pavlenko~\cite{Pavlenko}, until more recently~\cite{Gotman,Guilloton,Higuchi_Maruo_1979,Hsiung_1981,Hsiung_Shenyyan_1984,Maruo_Bessho}. 

In this paper, instead of using Michell's formula alone as an optimization criterion,  we propose to use the total resistance, by adding  to Michell's wave resistance a  term approximating the viscous resistance; this term is obtained  by assuming a constant viscous drag coefficient in the framework of the thin-ship approximation. Our approach, which results in quadratic programming,  has already been considered from a numerical point of view in~\cite{Lian}. From a theoretical point of view, a similar approach has been made in~\cite{Michalski_etal}, but the additional term was more complexe to deal with, and the analysis was therefore incomplete. 

Here,  we prove that minimizing this total resistance for a given speed, among the parametric hulls having a fixed volume and a fixed domain of parameters, is a well-posed problem. We also prove that the optimized hull is a smooth and symmetric form, which  depends continuously on the speed. Our theoretical results also include the case where Michell's wave resistance for an infinite fluid is replaced by Sretensky's formula in an infinitely deep and laterally confined fluid~\cite{Sretensky_1936}. For the numerical simulations, made with the \texttt{Scilab} software\footnote{\texttt{Scilab} is freely available at \textsf{http://www.scilab.org/}}, we use an efficient  $Q^1$ finite element discretization of the problem (use of ``tent functions'').  We recover results similar to those in~\cite{Lian};  in particular, for moderate values of the velocity,  we obtain the famous bulbous bow which reduces the wave resistance~\cite{Inui,Kostyukov}. In addition, we give numerical evidence that using Michell's wave resistance as an optimization criterion results in an ill-posed problem, and we obtain a theoretical lower bound on the degrees of freedom that should be used in order to minimize efficiently the wave resistance.

Of course,  nowadays,  computational fluid dynamics (CFD) provide more precise tools for ship hull optimization (see, for instance,~\cite{Hochkirch_Bertram,Mohammadi_Pironneau_2010,Park_Choi,Percival_etal_2001,Saha_etal,Zhang_etal_2009}). However, in spite of its well-known limitations (see~\cite{Gotman} for a review of these limitations), Michell's formula for the wave resistance remains a powerful tool for theoretical and computational purposes. The simplicity of our formulation allows us to obtain theoretical results which are at the present moment out of reach when considering the  full 3-dimensional incompressible Navier-Stokes equations. Moreover, our numerical approach is much faster than standard CFD computations.

The optimization problem is formulated in Section~\ref{sec2}. Well-posedness and related theoretical results are proved in Section~\ref{sec3}. Numerical methods are explained in   Section~\ref{sec4}, and the numerical results are given and commented in   Section~\ref{sec5}.   

\section{Formulation of the optimization problem}
\label{sec2}
Consider a ship moving with constant velocity on the surface of an unbounded  fluid.  A coordinated system fixed with respect to the ship is introduced.  The origin is located at midship in the center line plane, the $xy$-plane is the undisturbed water surface,   the positive $x$-axis is in the direction of motion and the $z$-axis is vertically downward.

 The hull is assumed to be symmetric with respect to the vertical $xz$-plane, with length   $L$ and   draft $T$. The immerged hull surface is represented by a  continuous nonnegative function 
$$y=f(x,z)\ge 0,\quad x\in [-L/2,L/2],\quad z\in [0,T],$$
with $f(\pm L/2,z)=0$ (for all $z$) and $f(x,T)=0$ (for all $x$). 
 
It is assumed that the fluid is incompressible, inviscid and  that the flow is irrotational. The effects of surface tension are neglected.  The motion has persisted long enough so that a steady state has been reached. Michell's theory~\cite{Michell} shows that the wave resistance can be computed by
\begin{equation}
\label{RM}
R_{Michell}=\frac{4\rho g^2}{\pi U^2}\int_1^\infty(I(\lambda)^2+J(\lambda)^2)\frac{\lambda^2}{\sqrt{\lambda^2-1}}d\lambda,
\end{equation}
with
\begin{equation}
\label{I}
I(\lambda)=\int_{-L/2}^{L/2}\int_{0}^T\frac{\partial f(x,z)}{\partial x}\exp\left(-\frac{\lambda^2 gz}{U^2}\right)\cos\left(\frac{\lambda g x}{U^2}\right)dxdz,
\end{equation}
\begin{equation}
\label{J}
J(\lambda)=\int_{-L/2}^{L/2}\int_0^T\frac{\partial f(x,z)}{\partial x}\exp\left(-\frac{\lambda^2 gz}{U^2}\right)\sin\left(\frac{\lambda g x}{U^2}\right)dxdz.
\end{equation}
Here, $U$ (in $\mathrm{m}\cdot \mathrm{s}^{-1}$) is the speed of the ship, $\rho$ (in $\mathrm{kg}\cdot\mathrm{m}^{-3}$) is the (constant) density of the fluid, and $g$ (in $\mathrm{m}\cdot\mathrm{s}^{-2}$) is the standard gravity. The double integrals $I(\lambda)$ and $J(\lambda)$ are in $\mathrm{m}^2$, and $R_{Michell}$ (in Newton) has the dimension of a force. The integration parameter $\lambda$ has no dimension: it can be interpreted as $\lambda=1/\cos\theta$, where $\theta$ is the angle at which the wave is propagating~\cite{Havelock}. 

In order to derive formula~\eqref{RM}, Michell used a linear theory and made additional assumptions known as the ``thin ship theory'' (see~\cite{Michelsen} for details). In particular, it is assumed that the angles made by the hull surface with the longitudinal plane of symmetry are small, i.e. 
\begin{equation}
\label{Dfpetit}
|\partial_xf|\ll 1\quad\mbox{ and }\quad |\partial_zf|\ll 1\quad\mbox{ in }\quad [-L/2,L/2]\times[0,T].
\end{equation}

For simplicity, we define 
$$v=g/U^2>0\quad\mbox{ and }\quad T_f(v,\lambda)=I(\lambda)-iJ(\lambda),$$
where $I$ and $J$ are given by~\eqref{I}-\eqref{J}. Then 
\begin{equation}
\label{defT}
T_f(v,\lambda)=\int_{-L/2}^{L/2}\int_0^T\partial_xf(x,z)e^{-\lambda^2 v z}e^{-i\lambda v x}dxdz,
\end{equation}
and $R_{Michell}$ can be written
\begin{equation}
\label{defR}
R(v,f)=\frac{4\rho g v}{\pi }\int_1^\infty\left|T_f(v,\lambda)\right|^2\frac{\lambda^2}{\sqrt{\lambda^2-1}}d\lambda.
\end{equation}
 The number $v$ (in $\mathrm{m}^{-1}$) is known as the Kelvin wave number for the transverse waves in deep water~\cite{Kelvin}. Notice that $\rho$ and $g$ are fixed, so $R$ depends only on $v$, i.e. the speed $U$, and on $f$, i.e. the form of the hull.

In view of numerical computations, we let $\Lambda\gg 1$ denote a real number and we replace $R(v,f)$ by the functional
\begin{equation}
\label{RLambda1}
(v,f)\mapsto\frac{4\rho g v}{\pi }\int_1^\Lambda\left|T_f(v,\lambda)\right|^2\frac{\lambda^2}{\sqrt{\lambda^2-1}}d\lambda.
\end{equation}
 For the numerical computation, we actually use a numerical integration formula of the form
\begin{equation}
\label{RLambda2}
\frac{4\rho g v}{\pi}\int_1^\Lambda\left|T_f(v,\lambda)\right|^2\frac{\lambda^2}{\sqrt{\lambda^2-1}}d\lambda\approx \frac{4\rho g v}{\pi}\sum_{j=1}^{J^\star}\omega_j\left|T_f(v,\lambda_j)\right|^2,
\end{equation}
with positive weights $\omega_j>0$, and with nodes $\lambda_j\in[1,\Lambda]$, $j=1,2,\ldots, J^\star$, where $J^\star$ is a well-chosen  positive integer (see~\eqref{num-6}). 

 In order to take into account the two formulations~\eqref{RLambda1} and~\eqref{RLambda2} in our analysis, we consider more generally a wave resistance of the form
\begin{equation}
\label{defRLambda}
R^\Lambda(v,f)=\frac{4\rho g v}{\pi }\int_1^\Lambda\left|T_f(v,\lambda)\right|^2d\mu(\lambda),
\end{equation}
where $\mu$ is a nonnegative and finite borelian  measure on $[1,\Lambda]$. Such a formulation also includes (a truncation of) Sretensky's summation formula for the wave resistance of a thin ship in a laterally confined and infinitely deep fluid~\cite{Sretensky_1936}.

 We point out that our well-posedness result  holds also for the functional $R(v,f)$ defined by~\eqref{defR} or for Sretensky's formula~\cite{Sretensky_1936} (see Remark~\ref{rem3.2}), but otherwise, setting $\Lambda<\infty$ simplifies the analysis, because the integral $\int_1^\infty\lambda^2(\lambda^2-1)^{-1/2}d\lambda$ diverges at $\infty$.

Let us turn now to the term representing the viscous resistance, or {\it viscous drag}~\cite{Molland_etal}. It reads 
$$R_{drag}=\frac{1}{2}  \, \rho U^2 \, C_d \, A \, ,$$ 
where $C_d$ is the viscous drag coefficient (which at some extent can be considered constant within the family of slender bodies), and $A$ is the surface area of the ship's wetted hull. When the graph of $f$ represents the ship's hull, $A$ is given by:
\begin{equation}
A=2 \integ{\Omega}{}{\sqrt{1 + | \nabla f(x,z) |^2} \, \mathrm{d}x \mathrm{d}z} \, ,
\end{equation}
 where here and below,  $\Omega=(-L/2,L/2)\times(0,T)$. 
When the ship is slender (\textit{i.e.}  $| \nabla f |$ uniformly small, see~\eqref{Dfpetit}), one can give a good approximation of the above integral by performing a Taylor expansion of $\sqrt{1 + | \nabla f |^2}$ at first order, for small values of $| \nabla f |^2$:
\begin{equation}
A/2 = 1 + \frac{1}{2}\integ{\Omega}{}{| \nabla f (x,z)|^2\, \mathrm{d}x \mathrm{d}z} + o(|| \nabla f ||_{\infty}) \, .
\end{equation}

The approximation of the viscous drag for small $\nabla f$ then reads:
$$R_{drag}= \rho U^2 \, C_d \, \left(1 + \frac{1}{2}\integ{\Omega}{}{| \nabla f (x,z)|^2\, \mathrm{d}x \mathrm{d}z} \right) \, .$$
Minimizing $R_{drag}$ is the same as minimizing the following quantity:
$$R_{drag}^*=\frac{1}{2}  \, \rho U^2 \, C_d \, \integ{\Omega}{}{| \nabla f (x,z)|^2\, \mathrm{d}x \mathrm{d}z} \, .$$
By setting 
\begin{equation}
\label{defeps}
\epsilon=\frac{1}{2}  \, \rho U^2 \, C_d,
\end{equation}
 we obtain 
$$R_{drag}^*=\epsilon \, \integ{\Omega}{}{| \nabla f (x,z)|^2\, \mathrm{d}x \mathrm{d}z} \, .$$
The parameter $\epsilon$  (in $\mathrm{Pa}$) is positive; it can be interpreted as a dynamical pressure, as in Bernoulli's law. 

The total water resistance functional $N^{\Lambda,\epsilon}(v,\cdot)$ is the sum of the wave resistance and of the viscous drag $R^\star_{drag}$: 
$$N^{\Lambda,\epsilon}(v,f):= R^\Lambda(v,f)+\epsilon\int_\Omega|\nabla f(x,z)|^2dxdz,$$
where $R^\Lambda$ is defined by~\eqref{defRLambda}. We will  minimize $N^{\Lambda,\epsilon}(v,\cdot)$, among admissible functions. 
Notice that  the additional term $\int_\Omega|\nabla f(x,z)|^2dxdz$ is isotropic, i.e. that no direction is priviledge in the $(x,z)$ plane.  This term   guarantees that the derivatives of a minimizer $f$ are defined in the space $L^2(\Omega)$ of square integrable function. Since we seek a minimizer, the additional term is small,  thus fulfilling the thin ship assumptions~\eqref{Dfpetit} in an integral sense (rather than pointwise).

The function space is now clear from the additional term, and we therefore introduce the space
$$H=\left\{f\in H^1(\Omega)\ : f(\pm L/2,\cdot)=0\mbox{ and }f(\cdot,T)=0\mbox{ in the sense of traces}\right\},$$
where $H^1(\Omega)$ denotes the standard $L^2$-Sobolev space (see, for instance,~\cite{Brezis}).  
$H$ is a closed subspace of $H^1(\Omega)$, so it  is a Hilbert space for the standard $H^1(\Omega)$-norm. We recall that 
$$f\mapsto \int_\Omega|\nabla f(x,z)|^2dxdz$$ 
is a norm on $H$, which is equivalent to the standard $H^1(\Omega)$-norm~\cite{Brezis}. This is due to the boundary values imposed in the definition of $H$. 

Let $V>0$ be the (half-)volume of an immerged hull. The set of admissible functions is the closed convex subset of $H$ defined by
$$C_V=\left\{f\in H\ :\ \int_\Omega f(x,z)dxdz=V\mbox{ and }f\ge 0\ \mbox{a.e. in }\Omega \right\}.$$

Our {\bf optimization problem $\mathcal{P}^{\Lambda,\epsilon}$} reads: for a given  Kelvin wave number $v$ and for a given volume $V>0$, find the function $f^\star$ which minimizes $N^{\Lambda,\epsilon}(v,f)$ among functions $f\in C_V$. 
\section{Resolution of the optimization problem}
\label{sec3}
\subsection{Well-posedness of the problem}
Unless otherwise stated, the parameters $\rho>0$, $g>0$, $V>0$, $\Lambda>0$,  $v>0$  and $\epsilon>0$ are fixed. We have:
\begin{theo}
\label{theowellposed}
Problem $\mathcal{P}^{\Lambda,\epsilon}$ has a unique solution $f^{\epsilon,v}\in C_V$. Moreover, $f^{\epsilon,v}$ is even with respect to $x$. 
\end{theo}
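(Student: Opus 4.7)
The plan is to apply the direct method of the calculus of variations in the Hilbert space $H$, exploiting that $N^{\Lambda,\epsilon}(v,\cdot)$ is a strictly convex quadratic functional on $H$ thanks to the term $\epsilon\|\nabla f\|_{L^2(\Omega)}^2$. The crucial preliminary observation is that, for fixed $v$ and $\lambda\in[1,\Lambda]$, the map $f\mapsto T_f(v,\lambda)$ defined by~\eqref{defT} is a continuous linear functional on $H$ satisfying the $\lambda$-uniform bound $|T_f(v,\lambda)|\leq |\Omega|^{1/2}\|\partial_x f\|_{L^2(\Omega)}$, which follows from $|e^{-\lambda^2 v z}e^{-i\lambda v x}|\leq 1$. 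Since $\mu$ is finite on $[1,\Lambda]$, the wave-resistance term $R^\Lambda(v,\cdot)$ is then a nonnegative continuous convex quadratic functional (the square of a continuous seminorm on $H$), and adding $\epsilon\|\nabla\cdot\|_{L^2}^2$ makes $N^{\Lambda,\epsilon}(v,\cdot)$ strictly convex, continuous, and coercive on $H$, using the equivalence between $\|\nabla\cdot\|_{L^2}$ and the $H^1$-norm on $H$ recalled just before the statement.

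For existence I would fix a minimizing sequence $(f_n)\subset C_V$. Coercivity gives boundedness in $H$, so up to extraction $f_n\rightharpoonup f^\star$ weakly in $H$. By the Rellich--Kondrachov theorem, $f_n\to f^\star$ strongly in $L^2(\Omega)$ and almost everywhere along a subsequence, so both constraints $\int_\Omega f=V$ and $f\geq 0$ pass to the limit; hence $f^\star\in C_V$. Weak lower semi-continuity of $N^{\Lambda,\epsilon}(v,\cdot)$ follows at once from its continuity and convexity on $H$; more concretely, $\|\nabla\cdot\|_{L^2}^2$ is weakly lsc, while the pointwise convergence $T_{f_n}(v,\lambda)\to T_{f^\star}(v,\lambda)$ (from weak convergence tested against the bounded kernel) combined with the uniform bound $|T_{f_n}(v,\lambda)|^2\leq C\sup_n\|\nabla f_n\|_{L^2}^2$ gives convergence of $\int_1^\Lambda|T_{f_n}|^2\,d\mu$ to $\int_1^\Lambda|T_{f^\star}|^2\,d\mu$ by dominated convergence. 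Therefore $f^\star$ attains the infimum. Uniqueness is then immediate from the strict convexity of $N^{\Lambda,\epsilon}(v,\cdot)$ and the convexity of $C_V$.

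For the evenness of $f^{\epsilon,v}$ I would use the standard symmetrization-uniqueness trick. Set $\tilde f(x,z):=f^{\epsilon,v}(-x,z)$. Clearly $\tilde f\in C_V$ and a change of variables gives $\int_\Omega|\nabla\tilde f|^2=\int_\Omega|\nabla f^{\epsilon,v}|^2$. Using $\partial_x\tilde f(x,z)=-(\partial_x f^{\epsilon,v})(-x,z)$ and substituting $x\mapsto -x$ in~\eqref{defT} yields $T_{\tilde f}(v,\lambda)=-\overline{T_{f^{\epsilon,v}}(v,\lambda)}$, whence $|T_{\tilde f}|^2=|T_{f^{\epsilon,v}}|^2$ and $N^{\Lambda,\epsilon}(v,\tilde f)=N^{\Lambda,\epsilon}(v,f^{\epsilon,v})$. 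Uniqueness of the minimizer forces $\tilde f=f^{\epsilon,v}$, i.e.\ $f^{\epsilon,v}$ is even in $x$.

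There is no serious obstacle here: once the linear continuity and the $\lambda$-uniform bound on $f\mapsto T_f(v,\lambda)$ on $H$ are established, the rest is textbook Hilbert-space convex optimization. The only mild point requiring attention is ensuring that the unilateral constraint $f\geq 0$ survives the weak limit, which is handled by the compact Sobolev embedding giving a.e.\ convergence along a subsequence.
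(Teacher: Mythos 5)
Your proposal is correct and follows essentially the same route as the paper: the direct method in $H$ with strict convexity for uniqueness and the reflection-plus-uniqueness trick for evenness. The only (equally valid) cosmetic differences are that you pass the constraints to the weak limit via Rellich--Kondrachov and a.e.\ convergence where the paper simply notes that $C_V$, being convex and strongly closed, is weakly closed, and you obtain full convergence of $R^\Lambda(v,f_n)$ by dominated convergence where the paper only needs the $\liminf$ inequality from Fatou.
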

\begin{proof}The Hilbertian norm $f\mapsto N^{\Lambda,\epsilon}(v,f)$ is strictly convex on $H$, because $f\mapsto R^\Lambda(v,f)$ is convex and $f\mapsto \int_\Omega|\nabla f(x,z)|^2dxdz$ is strictly convex. Since the set $C_V$ is convex, any minimizer is unique. 

Let now $(f_n)$ be a minimizing sequence in $C_V$. Then $(f_n)$ is bounded in $H$, and we can extract a subsequence, still denoted $(f_n)$, such that $f_n$ converges weakly in $H$ to some $f$. Since $C_V$ is a convex set which is closed for the strong topology, $C_V$ is also closed for the weak topology (see, e.g.,~\cite{Brezis}), so  $f$ belongs to $C_V$. Since $\partial_xf_n\to \partial_xf$ weakly in $L^2(\Omega)$,  $T_{f_n}(v,\lambda)\to T_f(v,\lambda)$ for every $\lambda>0$. Thus, by Fatou's lemma,
$$R^\Lambda(v,f)\le \liminf_nR^\Lambda(v,f_n).$$
Moreover, the norm $\int_\Omega|\nabla \cdot|$ is lower semi-continuous for the weak $H^1$-topology. This implies that 
$$N^{\Lambda,\epsilon}(v,f)\le \liminf_n N^{\Lambda,\epsilon}(v,f_n),$$
and this shows the minimality of $f$. 

Next, we prove that the minimizer $f$ is even with respect to $x$. For a function $h\in H$, let  $\check{h}$ be the function in $H$ defined by $\check{h}(x,z)=h(-x,z)$ a.e. We notice that if $h\in C_V$, then $\check{h}\in C_V$. It is also easily seen that $R^\Lambda(v,\check{h})=R^\Lambda(v,h)$ for all $h\in H$ (use definitions~\eqref{I}-\eqref{J} and a change of variable $x\to -x$). Thus $\check{f}$ is a function in $C_V$ such that $N^{\Lambda,\epsilon}(v,\check{f})=N^{\Lambda,\epsilon}(v,f)$. By uniqueness of the minimizer, $\check{f}=f$. 
\end{proof}
\begin{rem}
\label{rem3.2}This well-posedness result and its proof are also valid  if one uses Michell's wave resistance $R(v,f)$ instead of $R^\Lambda(v,f)$ in the definition of the function $N^{\Lambda,\epsilon}$.  A similar statement holds for Sretensky's wave resistance~\cite{Sretensky_1936} in a laterally confined and infinitely deep fluid.
\end{rem}

The following assertion shows that when $\epsilon$ is small, our optimal solution is an approximate solution to the non-regularized  optimization problem, i.e. the problem of finding a ship with minimal wave resistance. 
\begin{prop}The minimum value $N^{\Lambda,\epsilon}(v,f^{\epsilon,v})$ tends to 
$$m^{\Lambda,v}:=\inf_{f\in C_V} R^\Lambda(v,f)$$
 as $\epsilon$ tends to $0$.  
\end{prop}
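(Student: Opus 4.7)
The plan is to prove the convergence by a standard Tikhonov/regularization argument: establish matching upper and lower bounds for $N^{\Lambda,\epsilon}(v,f^{\epsilon,v})$ as $\epsilon\to 0$. Both are elementary given the structure of the functional and the fact that $C_V$ is already contained in $H$, so every admissible competitor already has finite Dirichlet energy.

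For the upper bound, fix $\delta>0$ and pick a near-optimal competitor $f_\delta\in C_V$ with $R^\Lambda(v,f_\delta)\le m^{\Lambda,v}+\delta$. Since $f_\delta\in H$, the quantity $D_\delta:=\int_\Omega|\nabla f_\delta|^2\,dx\,dz$ is finite. Using $f_\delta$ as a test function in the minimization defining $f^{\epsilon,v}$,
\[
N^{\Lambda,\epsilon}(v,f^{\epsilon,v})\ \le\ N^{\Lambda,\epsilon}(v,f_\delta)\ =\ R^\Lambda(v,f_\delta)+\epsilon D_\delta\ \le\ m^{\Lambda,v}+\delta+\epsilon D_\delta.
\]
Letting $\epsilon\to 0$ with $f_\delta$ (hence $D_\delta$) fixed yields $\limsup_{\epsilon\to 0} N^{\Lambda,\epsilon}(v,f^{\epsilon,v})\le m^{\Lambda,v}+\delta$, and since $\delta>0$ is arbitrary, $\limsup_{\epsilon\to 0} N^{\Lambda,\epsilon}(v,f^{\epsilon,v})\le m^{\Lambda,v}$.

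For the lower bound, the regularization term is non-negative, so we can simply drop it:
\[
N^{\Lambda,\epsilon}(v,f^{\epsilon,v})\ =\ R^\Lambda(v,f^{\epsilon,v})+\epsilon\int_\Omega|\nabla f^{\epsilon,v}|^2\,dx\,dz\ \ge\ R^\Lambda(v,f^{\epsilon,v})\ \ge\ m^{\Lambda,v},
\]
the last inequality because $f^{\epsilon,v}\in C_V$. Hence $\liminf_{\epsilon\to 0} N^{\Lambda,\epsilon}(v,f^{\epsilon,v})\ge m^{\Lambda,v}$.

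Combining the two bounds gives $\lim_{\epsilon\to 0} N^{\Lambda,\epsilon}(v,f^{\epsilon,v})=m^{\Lambda,v}$, as asserted. There is no real obstacle here: the only mild issue is ensuring the upper bound uses a competitor with finite Dirichlet energy, which is automatic since the infimum defining $m^{\Lambda,v}$ is already taken over $C_V\subset H$. (If one wanted a sharper statement, the same argument also shows $R^\Lambda(v,f^{\epsilon,v})\to m^{\Lambda,v}$ and $\epsilon\int_\Omega|\nabla f^{\epsilon,v}|^2\to 0$, but this is not required by the proposition.)
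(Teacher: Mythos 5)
Your proof is correct and follows essentially the same route as the paper: test the minimization with a near-minimizer $f_\delta$ of $R^\Lambda(v,\cdot)$ over $C_V$ to get the upper bound, and use $f^{\epsilon,v}\in C_V$ together with the nonnegativity of the regularizing term for the lower bound. The paper phrases the upper bound slightly differently via a fixed $\epsilon_0$ and the monotonicity of $\epsilon\mapsto N^{\Lambda,\epsilon}(v,f^{\epsilon,v})$, but the argument is the same.
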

\begin{proof}Let $\beta>0$. By definition of the infimum, there exists $f\in C_V$ such that $m^{\Lambda,v}\le R^\Lambda(v,f)<m^{\Lambda,v}+\beta$. We choose $\epsilon_0>0$ small enough so that 
$$\epsilon_0\int_\Omega|\nabla f|^2<\beta.$$
We have  
$$N^{\Lambda,\epsilon_0}(v,f^{\epsilon_0,v})\le N^{\Lambda,\epsilon_0}(v,f)\le R^\Lambda(v,f)+\beta.$$
Thus, for all $\epsilon\in(0,\epsilon_0)$, we have
$$m^{\Lambda,v}<N^{\Lambda,\epsilon}(v,f^{\epsilon,v})\le N^{\Lambda,\epsilon}(v,f^{\epsilon_0,v})\le N^{\Lambda,\epsilon_0}(v,f^{\epsilon_0,v})\le m^{\Lambda,v}+2\beta.$$
Since $\beta>0$ is arbitrary, the proof is complete.
\end{proof}

\subsection{Continuity of the optimum with respect to $v$}
In this section, we prove  that $f^{\epsilon,v}$ changes continuously as the parameter $v$ changes.
We first notice:
\begin{prop}
\label{propbounded}
The linear operator $f\mapsto (\lambda\mapsto T_f(v,\lambda))$ is bounded from $H$ into $L^2([1,\Lambda],\mu)$. 
\end{prop}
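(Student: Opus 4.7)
The plan is to obtain a pointwise-in-$\lambda$ bound on $|T_f(v,\lambda)|$ in terms of $\|\nabla f\|_{L^2(\Omega)}$ by Cauchy–Schwarz, and then use the finiteness of the measure $\mu$ to integrate. Linearity in $f$ is immediate from the definition~\eqref{defT}, so only boundedness needs work.

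First, I would write, for any $\lambda \in [1,\Lambda]$,
$$|T_f(v,\lambda)| \le \int_{-L/2}^{L/2}\int_0^T |\partial_x f(x,z)|\, e^{-\lambda^2 v z}\,dx\,dz,$$
using that $|e^{-i\lambda v x}| = 1$. Then Cauchy–Schwarz in $L^2(\Omega)$ yields
$$|T_f(v,\lambda)|^2 \le \|\partial_x f\|_{L^2(\Omega)}^2 \int_\Omega e^{-2\lambda^2 v z}\,dx\,dz = L\,\|\partial_x f\|_{L^2(\Omega)}^2\cdot \frac{1-e^{-2\lambda^2 v T}}{2\lambda^2 v}.$$
For $\lambda \ge 1$ the last factor is bounded by $1/(2v)$, so one gets the uniform-in-$\lambda$ estimate
$$|T_f(v,\lambda)|^2 \le \frac{L}{2v}\,\|\partial_x f\|_{L^2(\Omega)}^2 \le \frac{L}{2v}\,\|\nabla f\|_{L^2(\Omega)}^2.$$

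Next I would check that $\lambda \mapsto T_f(v,\lambda)$ is (Borel) measurable: for $f\in H$ the integrand in~\eqref{defT} is continuous in $\lambda$ and dominated by the fixed $L^1(\Omega)$ function $|\partial_x f|$, so by dominated convergence $\lambda \mapsto T_f(v,\lambda)$ is in fact continuous on $[1,\Lambda]$. In particular it is $\mu$-measurable. Integrating the pointwise bound above against $\mu$ gives
$$\int_1^\Lambda |T_f(v,\lambda)|^2\,d\mu(\lambda) \le \frac{L\,\mu([1,\Lambda])}{2v}\,\|\nabla f\|_{L^2(\Omega)}^2,$$
where $\mu([1,\Lambda]) < \infty$ by hypothesis.

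Finally, since $f \mapsto \|\nabla f\|_{L^2(\Omega)}$ is an equivalent norm on $H$ (as recalled in Section~\ref{sec2}), the right-hand side is bounded by a constant times $\|f\|_{H^1(\Omega)}^2$, which is precisely the asserted boundedness from $H$ to $L^2([1,\Lambda],\mu)$. There is no serious obstacle here; the only slightly delicate point is making sure the pointwise bound is uniform in $\lambda\in[1,\Lambda]$ so that the finiteness of $\mu$ can be used cleanly, which is why the restriction $\lambda\ge 1$ matters (it kills the $1/\lambda^2$ singularity at $\lambda=0$ that would otherwise appear in the $z$-integral).
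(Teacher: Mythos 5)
Your proof is correct and follows essentially the same route as the paper: a Cauchy--Schwarz bound in $L^2(\Omega)$ giving $|T_f(v,\lambda)|^2\le \|\partial_x f\|_{L^2(\Omega)}^2\,L/(2\lambda^2 v)$, uniform for $\lambda\ge 1$, followed by integration against the finite measure $\mu$. Your extra remarks on measurability of $\lambda\mapsto T_f(v,\lambda)$ and on the norm equivalence are details the paper leaves implicit, but they do not change the argument.
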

\begin{proof} By the Cauchy-Schwarz inequality, 
\begin{eqnarray}
\left|T_f(v,\lambda)\right|^2&\le &\norm{\partial_x f}_{L^2(\Omega)}^2\int_\Omega \e^{-2\lambda^2 v z}dxdz\nonumber\\
&\le &\norm{\partial_x f}_{L^2(\Omega)}^2\frac{L}{2\lambda^2 v}.\label{aux1}
\end{eqnarray}
Thus, 
$$\norm{T_f(v,\lambda)}_{L^2([1,\Lambda],\mu)}\le \norm{\partial_x f}_{L^2(\Omega)}\left(\frac{L}{2v}\right)^{1/2}\mu([1,\Lambda])^{1/2},$$
and this  proves the claim, since $\mu([1,\Lambda])<\infty$ by assumption (cf.~\eqref{defRLambda}). 
\end{proof}
In particular, by definition~\eqref{defRLambda}, 
\begin{equation}
\label{propRLambda}
R^\Lambda(v,f)=\frac{4\rho g v}{\pi}\norm{T_f(v,\lambda)}_{L^2([1,\Lambda],\mu)}^2
\end{equation}
 is well defined for all $f\in H$, and $f\mapsto R^\Lambda(v,f)$ is a continuous nonnegative quadratic form on $H$. 

The following result will prove useful:
\begin{lem}
\label{lem1}
Let $(v_n)$ be a sequence of positive real numbers such that $v_n\to \bar{v}>0$, and let $(h_n)$ be a sequence in $H$ such that $h_n\to h$ weakly in $H$. Then $R^\Lambda(v_n,h_n)\to R^\Lambda(\bar{v},h)$.
\end{lem}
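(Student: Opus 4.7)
The plan is to reduce the claim to a pointwise-in-$\lambda$ convergence statement combined with a dominated convergence argument on the finite measure space $([1,\Lambda],\mu)$, together with the obvious convergence of the prefactor $v_n\to\bar v$.

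First I would establish pointwise convergence: for every fixed $\lambda\in[1,\Lambda]$,
$$T_{h_n}(v_n,\lambda)\longrightarrow T_h(\bar v,\lambda).$$
The natural splitting is
$$T_{h_n}(v_n,\lambda)-T_h(\bar v,\lambda)=\langle \partial_x h_n,\varphi_{v_n,\lambda}-\varphi_{\bar v,\lambda}\rangle_{L^2(\Omega)}+\langle \partial_x h_n-\partial_x h,\varphi_{\bar v,\lambda}\rangle_{L^2(\Omega)},$$
where $\varphi_{v,\lambda}(x,z)=e^{-\lambda^2 v z}e^{i\lambda v x}$ (the conjugate of the kernel in~\eqref{defT}). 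The first inner product tends to $0$ because $\varphi_{v_n,\lambda}\to\varphi_{\bar v,\lambda}$ uniformly on $\Omega$ (the kernel is smooth in $v$), hence in $L^2(\Omega)$, while $\|\partial_x h_n\|_{L^2}$ is bounded (weakly convergent sequences are norm bounded). The second inner product tends to $0$ directly because $\partial_x h_n\to\partial_x h$ weakly in $L^2(\Omega)$.

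Next I would produce a $\lambda$-integrable dominant. Using the estimate~\eqref{aux1} already proved in Proposition~\ref{propbounded},
$$|T_{h_n}(v_n,\lambda)|^2\le \frac{L}{2\lambda^2 v_n}\,\|\partial_x h_n\|_{L^2(\Omega)}^2\le \frac{L}{2 v_{\min}}\,\sup_n\|\partial_x h_n\|_{L^2(\Omega)}^2,$$
where $v_{\min}:=\inf_n v_n>0$ since $v_n\to\bar v>0$. This gives a constant bound, which is $\mu$-integrable on $[1,\Lambda]$ because $\mu$ is finite. Dominated convergence on $([1,\Lambda],\mu)$ then yields
$$\int_1^\Lambda |T_{h_n}(v_n,\lambda)|^2\,d\mu(\lambda)\longrightarrow \int_1^\Lambda |T_h(\bar v,\lambda)|^2\,d\mu(\lambda).$$
Multiplying by the scalar prefactor $4\rho g v_n/\pi\to 4\rho g\bar v/\pi$ delivers $R^\Lambda(v_n,h_n)\to R^\Lambda(\bar v,h)$ via~\eqref{defRLambda}.

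The only delicate point is the first step, where one has both a varying weight $\varphi_{v_n,\lambda}$ and only weak convergence of $\partial_x h_n$; this is handled by the standard ``strong $\times$ weak'' trick above, splitting so that one factor converges strongly in $L^2(\Omega)$ and the other is either uniformly bounded in $L^2$ or tested against a fixed $L^2$ element. Everything else is bookkeeping with the uniform-in-$n$ bound made possible by $v_n\to\bar v>0$.
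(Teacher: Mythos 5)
Your proof is correct and follows essentially the same route as the paper: pointwise convergence of $T_{h_n}(v_n,\lambda)$ via a ``strong $\times$ weak'' splitting (the paper does the same split, bounding $T_{h_n}(v_n,\lambda)-T_{h_n}(\bar v,\lambda)$ by the Lipschitz dependence of the kernel on $v$ and handling $T_{h_n}(\bar v,\lambda)-T_h(\bar v,\lambda)$ by weak $L^2$ convergence), followed by dominated convergence on the finite measure space using the bound from Proposition~\ref{propbounded}. Your explicit remark that $\inf_n v_n>0$ is needed for the uniform dominant is a point the paper leaves implicit.
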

\begin{proof}
let $k(v,\lambda,x,z)=\e^{-\lambda^2 vz}\e^{-i\lambda v x}$ denote the kernel of $T_f$. By the mean value inequality, for all $\lambda\in[1,\Lambda]$, for all $x\in [-L/2,L/2]$ and for all $z\in [0,T]$, we have
$$|k(v_n,\lambda,x,z)-k(v,\lambda,x,z)|\le (\Lambda^2T+\Lambda L/2)|v_n-v|.$$
Thus, 
\begin{eqnarray*}
\left|T_{h_n}(v_n,\lambda)-T_{h_n}(\bar{v},\lambda)\right|&\le &(\Lambda^2T+\Lambda L/2)\int_\Omega |\partial_x h_n(x,z)|dxdz|v_n-v|\\
&\le &(\Lambda^2T+\Lambda L/2)\norm{\partial_x h_n}_{L^2(\Omega)}(LT)^{1/2}|v_n-v|,
\end{eqnarray*}
and so $T_{h_n}(v_n,\lambda)-T_{h_n}(\bar{v},\lambda)\to 0$ (in $\Rr$) as $n\to \infty$. Moreover, for all $\lambda\in [1,\Lambda]$, 
$$T_{h_n}(\bar{v},\lambda)- T_h(\bar{v},\lambda)\to 0$$
since $\partial_x h_n$ converges to $\partial_x h$ weakly in $L^2(\Omega)$. We deduce from the triangle inequality that for all $\lambda\in [1,\Lambda]$,
$$T_{h_n}(v_n,\lambda)\to T_h(\bar{v},\lambda).$$ 
Estimate~\eqref{aux1} in the proof of Proposition~\ref{propbounded} shows that $|T_{h_n}(v_n,\lambda)|$ is bounded by a constant independent of $n$ and $\lambda\in[1,\Lambda]$. Since the total measure $\mu$ is finite on $[1,\Lambda]$, we can apply Lebesgue's dominated convergence theorem, which yields
$$\norm{T_{h_n}(v_n,\lambda)}_{L^2([1,\Lambda],\mu)}\to \norm{T_{h}(\bar{v},\lambda)}_{L^2([1,\Lambda]),\mu)}.$$
The claim follows from~\eqref{propRLambda}. 
\end{proof}

 We can now state:
\begin{theo}
\label{theocont}
Let $\bar{v}>0$. Then $f^{\epsilon,v}$ converges strongly in $H$ to $f^{\epsilon,\bar{v}}$  as $v\to \bar{v}$.  
\end{theo}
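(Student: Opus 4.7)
The plan is to argue by the standard compactness-plus-uniqueness scheme for parametrized convex minimizers, combined with Lemma \ref{lem1}.

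First I would take an arbitrary sequence $v_n \to \bar{v}$ and set $f_n = f^{\epsilon,v_n}$, and establish that $(f_n)$ is bounded in $H$. Pick any fixed admissible $g \in C_V$ (for example any smooth nonnegative bump with the right integral); minimality gives
\[
\epsilon\int_\Omega |\nabla f_n|^2 \, dxdz \le N^{\Lambda,\epsilon}(v_n,f_n) \le N^{\Lambda,\epsilon}(v_n,g),
\]
and the right-hand side converges to $N^{\Lambda,\epsilon}(\bar v,g)$ by Lemma \ref{lem1}, hence is bounded in $n$. Since $\|\nabla \cdot\|_{L^2(\Omega)}$ is an equivalent norm on $H$, this yields the desired bound.

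Next I would extract a subsequence (not relabeled) with $f_n \rightharpoonup f^\star$ weakly in $H$, and identify $f^\star = f^{\epsilon,\bar v}$. Since $C_V$ is convex and strongly closed, it is weakly closed, so $f^\star \in C_V$. For every $g\in C_V$ one has $N^{\Lambda,\epsilon}(v_n,f_n) \le N^{\Lambda,\epsilon}(v_n,g)$; applying Lemma \ref{lem1} on the right and combining, on the left, the convergence $R^\Lambda(v_n,f_n)\to R^\Lambda(\bar v,f^\star)$ from Lemma \ref{lem1} with the weak lower semicontinuity of $f\mapsto \int_\Omega|\nabla f|^2$, we get
\[
N^{\Lambda,\epsilon}(\bar v,f^\star) \le \liminf_n N^{\Lambda,\epsilon}(v_n,f_n) \le \lim_n N^{\Lambda,\epsilon}(v_n,g) = N^{\Lambda,\epsilon}(\bar v,g).
\]
Hence $f^\star$ minimizes $N^{\Lambda,\epsilon}(\bar v,\cdot)$ over $C_V$, and by the uniqueness part of Theorem \ref{theowellposed}, $f^\star = f^{\epsilon,\bar v}$. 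Because the limit does not depend on the subsequence, the whole sequence $f_n$ converges weakly to $f^{\epsilon,\bar v}$.

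The main subtlety is the upgrade from weak to strong convergence in $H$. Taking $g = f^{\epsilon,\bar v}$ in the minimality inequality and applying Lemma \ref{lem1} on the right gives
\[
\limsup_n N^{\Lambda,\epsilon}(v_n,f_n) \le \lim_n N^{\Lambda,\epsilon}(v_n,f^{\epsilon,\bar v}) = N^{\Lambda,\epsilon}(\bar v,f^{\epsilon,\bar v}).
\]
Combined with the liminf inequality above this forces $\lim_n N^{\Lambda,\epsilon}(v_n,f_n) = N^{\Lambda,\epsilon}(\bar v,f^{\epsilon,\bar v})$. Since Lemma \ref{lem1} already gives $R^\Lambda(v_n,f_n)\to R^\Lambda(\bar v,f^{\epsilon,\bar v})$, we deduce
\[
\int_\Omega |\nabla f_n|^2 \, dxdz \; \longrightarrow \; \int_\Omega |\nabla f^{\epsilon,\bar v}|^2 \, dxdz.
\]
Because $\|\nabla\cdot\|_{L^2(\Omega)}$ is a Hilbertian norm on $H$, weak convergence together with convergence of norms yields strong convergence in $H$, as required.
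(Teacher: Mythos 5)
Your proof is correct and follows essentially the same route as the paper: boundedness of the minimizers via comparison with a fixed competitor, weak compactness, identification of the limit through the liminf inequality and the constant recovery sequence (which the paper packages as $\Gamma$-convergence of $N^{\Lambda,\epsilon}(v_n,\cdot)$, citing Lemma~\ref{lem1} exactly as you do), uniqueness of the minimizer to upgrade subsequential to full weak convergence, and finally convergence of the energies combined with the convergence of $R^\Lambda(v_n,f_n)$ to extract convergence of the Dirichlet norms and hence strong convergence. The only difference is cosmetic: you carry out the $\Gamma$-liminf/limsup steps by hand rather than invoking the $\Gamma$-convergence terminology.
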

\begin{proof}Let $(v_n)$ be a sequence of positive real numbers such that $v_n\to \bar{v}$. Our goal is to show that $f^{\epsilon,v_n}$ tends to $f^{\epsilon,\bar{v}}$ strongly in $H$.  

First, we claim that the sequence of functionals $(N^{\Lambda,\epsilon}(v_n,\cdot))_n$ $\Gamma$-converges to $N^{\Lambda,\epsilon}(\bar{v},\cdot)$ for the weak topology in $H$ (see, e.g.~\cite{Braides}). Indeed, let $(h_n)$ be a sequence in $H$ such that $h_n\to h$ weakly in $H$. Lemma~\ref{lem1} shows that $R^\Lambda(v_n,h_n)\to R^\Lambda(\bar{v},h)$. Using the lower semicontinuity of the norm in $H$, we deduce that 
\begin{equation}
\label{Nlsc}
 N^{\Lambda,\epsilon}(\bar{v},h)\le \liminf_nN^{\Lambda,\epsilon}(v_n,h_n) .
\end{equation}
Moreover, for any $h\in H$, using Lemma~\ref{lem1} again, we obtain 
\begin{equation}
\label{limsup}
N^{\Lambda,\epsilon}(\bar{v},h)=\lim_{n\to +\infty} N^{\Lambda,\epsilon}(v_n,h).
\end{equation}
This proves the claim. 

Next, we notice that the sequence $f^{\epsilon,v_n}$ is bounded in $H$, since for any choice of $h\in C_V$, we have 
$$\epsilon\frac{\rho g}{2 v_n}\int_\Omega|\nabla f^{\epsilon,v_n}|^2\le N^{\Lambda,\epsilon}(v_n,f^{\epsilon,v_n})\le N^{\Lambda,\epsilon}(v_n,h),$$
and the sequence $N^{\Lambda,\epsilon}(v_n,h)$ is bounded by~\eqref{limsup}. Thus, the sequence $f^{\epsilon,v_n}$ has an accumulation point (in $C_V$) for the weak topology in $H$; the $\Gamma$-convergence result (which is also valid in $C_V$) implies that any accumulation point is a minimizer of $N^{\Lambda,\epsilon}(\bar{v},\cdot)$, i.e. $f^{\epsilon,\bar{v}}$. Uniqueness of the minimizer implies that the whole sequence converges weakly in $H$ to $f^{\epsilon,\bar{v}}$.   

 Finally, we notice that 
 $$N^{\Lambda,\epsilon}(v_n,f^{\epsilon,v_n})\le N^{\Lambda,\epsilon}(v_n,f^{\epsilon,\bar{v}}),$$
and this, together with~\eqref{Nlsc} and~\eqref{limsup}, implies that 
$$\lim_{n\to +\infty}N^{\Lambda,\epsilon}(v_n,f^{\epsilon,v_n})= N^{\Lambda,\epsilon}(\bar{v},f^{\epsilon,\bar{v}}).$$
As a consequence,  by Lemma~\ref{lem1}, $\lim_{n\to +\infty}\int_\Omega|\nabla f^{\epsilon,v_n}|^2=\int_\Omega|\nabla f^{\epsilon,\bar{v}}|^2$, so $f^{\epsilon,v_n}$ converges strongly in $H$ to $f^{\epsilon,\bar{v}}$. This concludes the proof. 
\end{proof}
\begin{rem}If $\epsilon$ is a (strictly) positive and continuous function of $v$, then a simple adaptation of the proof above shows that Theorem~\ref{theocont} is still valid.  This is the case if $\epsilon=C_d\rho g/(2v)$ with $C_d$ constant, as in~\eqref{defeps}.
\end{rem}
\subsection{Regularity of the solution}
In this section, we prove the $W^{2,p}$ regularity of the solution for all $p<\infty$, by using the regularity of the non-constrained optimization problem.

 As a shortcut, we  define
$$a(u,w)=\epsilon\int_\Omega\nabla u\cdot\nabla w\, dxdz\qquad (u,w)\in H\times H,$$
so that $a$ is a continuous bilinear form on $H$; $a$ is also coercive, i.e. 
$$a(u,u)>0\quad\forall u\in H\setminus\{0\},$$
 and the  Hilbertian norm $a\mapsto a(u,u)^{1/2}$ is equivalent to the $H^1$-norm on $H$.  
Since the domain $\Omega$ is a rectangle, the  space $H$ is dense in $L^2(\Omega)$, and we have the continuous injections $H\hookrightarrow L^2(\Omega)\hookrightarrow H'$.  We can define $A$ the operator from $H$ into $H'$ such that 
\begin{equation}
\label{defA}
a(u,w)=\langle Au,w\rangle\qquad\forall u,w\in H,
\end{equation}
where  $\langle\cdot,\cdot\rangle$ denotes the duality product $H'\times H$.  

Let finally $H^+=\{f\in H\ :\ f\ge 0\mbox{ a.e. in }\Omega\}$ and
$$k(x,z,x',z')=\frac{4\rho g v^3}{\pi}\int_1^\Lambda\lambda^2\cos(\lambda v (x- x'))e^{-\lambda^2 v(z+z')}d\mu(\lambda).$$
Regularity of a minimizer is a consequence of  the Euler-Lagrange equation, which  reads:
\begin{prop}
\label{propEL}
The solution $f\equiv f^{\epsilon,v}$ of problem $\mathcal{P}^{\Lambda,\epsilon}$  satisfies the variational inequality
\begin{eqnarray*}
a(f,h-f)+\int_\Omega\left(\int_\Omega k(x,z,x',z')f(x',z')dx'dz'\right)(h-f)dxdz\\
\qquad\ge C\int_\Omega(h-f)dxdz\qquad\forall h\in H^+,
\end{eqnarray*}
for some constant $C\in\Rr$. 
\end{prop}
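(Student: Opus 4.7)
The plan is to compute the Gateaux derivative of $N^{\Lambda,\epsilon}(v,\cdot)$ at the minimizer $f$, put it in the form dictated by the kernel $k$, and then invoke Lagrange duality for the volume constraint to transform the natural variational inequality on $C_V$ into one on the cone $H^+$ featuring a scalar multiplier $C$.

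First I would identify $R^\Lambda(v,\cdot)$ as a continuous nonnegative quadratic form on $H$ (this is~\eqref{propRLambda}) and compute its polar symmetric bilinear form
\[
B^R(f,w)=\frac{4\rho g v}{\pi}\int_1^\Lambda \mathrm{Re}\bigl(T_f(v,\lambda)\overline{T_w(v,\lambda)}\bigr)\,d\mu(\lambda),
\]
so that $dR^\Lambda(v,f)\cdot w=2B^R(f,w)$. Expanding $T_f\overline{T_w}$ as a double integral over $\Omega\times\Omega$ (Fubini being justified by Proposition~\ref{propbounded} and the finiteness of $\mu$), I would integrate by parts once in $x$ and a second time in $x'$, transferring both $\partial_x$ operators onto the trigonometric factor. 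The boundary terms vanish since $f,w\in H$ have zero trace on $\{\pm L/2\}\times[0,T]$, and differentiating $\cos(\lambda v(x-x'))$ twice brings out the factor $\lambda^2v^2$ that appears in the definition of $k$. Combined with $d(a(f,f))\cdot w=2a(f,w)$, this yields
\[
\tfrac{1}{2}\,dN^{\Lambda,\epsilon}(v,f)\cdot w = a(f,w)+\int_\Omega w(x,z)\Bigl(\int_\Omega k(x,z,x',z')f(x',z')\,dx'dz'\Bigr)dxdz.
\]

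Next I would exploit minimality. Convexity of $N^{\Lambda,\epsilon}(v,\cdot)$ and minimality of $f$ over the convex set $C_V$ give, by testing along $t(h-f)$ for $h\in C_V$ and $t\in[0,1]$,
\[
a(f,h-f)+\int_\Omega (Kf)(h-f)\,dxdz\ge 0 \qquad \forall h\in C_V,
\]
where $(Kf)(x,z)$ denotes the inner integral above. To remove the volume constraint, I would apply Lagrangian duality: the equality $\int_\Omega f=V$ is linear, defined by a surjective continuous functional on $H$, and Slater's condition holds (strictly positive functions of any prescribed positive volume exist in $H^+$). Standard convex duality (see e.g.\ Ekeland--Temam) then produces a real multiplier $\tilde C$ such that $f$ also minimizes $h\mapsto N^{\Lambda,\epsilon}(v,h)-\tilde C\int_\Omega h$ over the closed convex cone $H^+$; the first-order condition for this convex minimization on the cone reads $a(f,h-f)+\int_\Omega (Kf)(h-f)\ge (\tilde C/2)\int_\Omega (h-f)$ for all $h\in H^+$, so that $C:=\tilde C/2$ is the multiplier claimed.

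The main obstacle is the double integration by parts producing $k$: one must carry out both boundary integrations consistently, verify that the boundary terms truly vanish (only the $x$-boundaries at $\pm L/2$ are relevant, as no $z$-derivative appears), and justify the Fubini exchange between the $\lambda$-integration against $\mu$ and the spatial integrations — the finiteness of $\mu$ on $[1,\Lambda]$ being essential here. Once this identity is in hand, the Lagrange-multiplier passage from $C_V$ to $H^+$ is routine convex duality.
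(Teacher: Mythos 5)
Your computation of the Gateaux derivative and of the kernel $k$ is the same as the paper's: the paper integrates by parts once in $x$ in the formulas for $I$ and $J$ (turning $\partial_x f$ into $f$ against $\lambda v\sin(\lambda vx)$ and $-\lambda v\cos(\lambda vx)$), and expanding $I_h^2+J_h^2$ then produces exactly your double integral with $\lambda^2v^2\cos(\lambda v(x-x'))e^{-\lambda^2v(z+z')}$; the Fubini exchange against the finite measure $\mu$ is invoked there too. Where you genuinely diverge is the treatment of the volume constraint. The paper does not use abstract duality: it perturbs along the explicitly renormalized path $\varphi(t)=V\,(f+t(h-f))/\int_\Omega(f+t(h-f))$, which stays in $C_V$ for small $t\ge 0$ and whose derivative $\varphi'(0)=(h-f)-f\int_\Omega(h-f)/V$ produces the multiplier for free, with the explicit value $C=a(f,f)/V+\frac{4\rho g v}{\pi V}\int_1^\Lambda I_f^2+J_f^2\,d\mu$. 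This buys a constructive constant and avoids any constraint-qualification discussion.

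The one step of yours I would not accept as written is the appeal to Slater's condition: strictly positive functions of volume $V$ are \emph{not} interior points of $H^+$ in the $H^1(\Omega)$ topology (on a two-dimensional domain $H^1$ functions need not be continuous, and an arbitrarily small $H^1$ perturbation of a positive function can be negative on a set of positive measure), so $H^+$ has empty interior and the standard Slater hypothesis fails. The conclusion you want — a scalar multiplier for the single linear equality constraint — is nevertheless true and can be obtained elementarily: for $h\in H^+$ with $\int_\Omega h=W>0$, test the $C_V$-inequality with $(V/W)h\in C_V$ and rearrange; this gives precisely $G(h-f)\ge (G(f)/V)\int_\Omega(h-f)$ where $G$ is your linearized functional, recovering the paper's constant. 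So the gap is in the justification, not in the statement, and it is repaired by exactly the rescaling idea that the paper builds into $\varphi(t)$ from the start.
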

\begin{proof}Using the bilinear form $a$, and performing an integration by parts with respect to $x$ in formulas~\eqref{I}-\eqref{J},  for $h\in H$, we have
$$N^{\Lambda,\epsilon}(h)=a(h,h)+\frac{4\rho g v}{\pi}\int_1^\Lambda  I_h(\lambda,v)^2+J_h(\lambda,v)^2 d\mu(\lambda),$$
where 
$$I_h(\lambda,v)=\lambda v\int_\Omega h(x,z)e^{-\lambda^2 vz}\sin(\lambda v x)dxdz,$$
$$J_h(\lambda,v)=-\lambda v\int_\Omega h(x,z)e^{-\lambda^2 vz}\cos(\lambda v x)dxdz.$$

Let now $h\in H^+$ and set 
$$\varphi(t)=V(f+t(h-f))/(\int_\Omega f+t(h-f))\quad t\ge 0,$$
so that $\varphi(t)\in C_V$ for all $t\ge 0$ and $\varphi(0)=f$. Then $N^{\Lambda,\epsilon}(\varphi(t))\ge N^{\Lambda,\epsilon}(f)$, so
$$\frac{d}{dt}N^{\Lambda,\epsilon}(\varphi(t))_{|t=0}\ge 0.$$
Computing, we have $\varphi'(0)=(h-f)-f\int_\Omega(h-f)/V$ and 
\begin{eqnarray*}
\frac{d}{dt}N^{\Lambda,\epsilon}(\varphi(t))_{|t=0}&=&2a(f,\varphi'(0))\\
&&+\frac{8\rho g v}{\pi}\int_1^\Lambda I_f(\lambda,v)I_{\varphi'(0)}(\lambda,v)+J_f(\lambda,v)J_{\varphi'(0)}(\lambda,v)d\mu(\lambda).
\end{eqnarray*}
The expected variational inequality is obtained with the constant 
$$C=a(f,f)/V+\frac{4\rho g v}{\pi V}\int_1^\Lambda I_f(\lambda,v)^2+J_f(\lambda,v)^2d\mu(\lambda),$$
by an application of Fubini's theorem. 
\end{proof}
For sake of completeness, we recall the following classical result which relates the regularity of the constrained problem to the regularity of the unconstrained problem. Let $C^\infty_c(\Omega)$ denote the space of smooth functions with compact support in $\Omega$, and let
$$C^\infty_c(\Omega)^+=\{\varphi\in C^\infty_c(\Omega)\ :\ \varphi\ge 0\mbox{ in }\Omega\}.$$
We say that two elements $w,z\in H'$ satisfy $w\ge z$ if $\langle w,\varphi\rangle\ge \langle z,\varphi\rangle$ for all $\varphi\in C^\infty_c(\Omega)^+$. 
\begin{theo}
\label{theoreg1}
Let $w\in L^2(\Omega)$. The solution $f\in H^+$ of the variational problem 
\begin{equation}
\label{aux3.0}
a(f,h-f)\ge \langle w,h-f\rangle\qquad\forall h\in H^+
\end{equation}
 satisfies $Af\ge w$ and  $w^+\ge Af$, where $A$ is defined by~\eqref{defA}.  
\end{theo}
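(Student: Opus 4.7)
The inequality $Af \ge w$ is the easy half: for any $\varphi \in C^\infty_c(\Omega)^+$, the perturbation $h = f + \varphi$ belongs to $H^+$ (same trace, still nonnegative), so testing \eqref{aux3.0} with this $h$ and using \eqref{defA} gives $\langle Af - w, \varphi\rangle \ge 0$ immediately.

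For the reverse inequality $Af \le w^+$, my plan is to test \eqref{aux3.0} with the truncation $h_\tau := (f - \tau\varphi)^+$ for $\varphi \in C^\infty_c(\Omega)^+$ and small $\tau > 0$. This $h_\tau$ belongs to $H^+$ because $H^1$ is stable under the positive part, and a case analysis on $\{f \ge \tau\varphi\}$ yields the identity
$$h_\tau - f \;=\; -\min(f, \tau\varphi) \;=\; (\tau\varphi - f)^+ - \tau\varphi.$$
Substituting and rearranging, \eqref{aux3.0} becomes
$$\tau\,\langle Af - w, \varphi\rangle \;\le\; a\bigl(f,(\tau\varphi-f)^+\bigr) - \langle w,(\tau\varphi-f)^+\rangle.$$

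The strategy is to divide by $\tau$ and pass to the limit $\tau \to 0^+$. The $w$-term is routine: since $(\tau\varphi-f)^+/\tau \to \varphi\chi_{\{f=0\}}$ pointwise and is dominated by $\varphi$, dominated convergence yields $\tau^{-1}\langle w,(\tau\varphi-f)^+\rangle \to \int_{\{f=0\}} w\varphi\,dx\,dz$. For the bilinear term, the Sobolev chain rule $\nabla(\tau\varphi-f)^+ = (\tau\nabla\varphi - \nabla f)\chi_{\{\tau\varphi \ge f\}}$ splits $\tau^{-1}a(f,(\tau\varphi-f)^+)$ into the linear piece $\epsilon\int_{\{f\le\tau\varphi\}}\nabla f\cdot\nabla\varphi$, which tends to $0$ by dominated convergence on the shrinking set $\{0 < f \le \tau\varphi\}$ (the part over $\{f=0\}$ vanishes since $\nabla f = 0$ there by Stampacchia's theorem), and the quadratic piece $-(\epsilon/\tau)\int_{\{f\le\tau\varphi\}}|\nabla f|^2$, which is nonpositive and can therefore be dropped when taking $\limsup$. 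Collecting everything, $\langle Af - w, \varphi\rangle \le -\int_{\{f=0\}} w\varphi \le \int_\Omega w^-\varphi = \langle w^-,\varphi\rangle$, and since $w^+ = w + w^-$ this is exactly $Af \le w^+$.

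The main obstacle is the control of the bilinear term under the nonsmooth truncation $(f - \tau\varphi)^+$: a priori the piece $(\epsilon/\tau)\int|\nabla f|^2$ is singular. It is neutralized by the fortunate observation that after rearrangement this piece appears with the correct (nonpositive) sign, so only the innocuous linear part needs to be estimated. The lone analytic input actually required is Stampacchia's classical theorem, which localizes the limit to the coincidence set $\{f=0\}$ and produces the final $w^-$-bound.
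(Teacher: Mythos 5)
Your proposal is correct, but it proves the second inequality by a genuinely different route than the paper. For $Af\ge w$ you do exactly what the paper does. For $Af\le w^+$, the paper does not use truncated test functions at all: it introduces the auxiliary obstacle problem~\eqref{aux3.1} for a function $\sigma\le f$ with right-hand side $w^+$, shows $\sigma\ge 0$ (testing with $\sigma^+$ and using $a(\sigma^+,\sigma^-)=0$ plus coercivity), then plays the two variational inequalities against each other ($h=\sigma$ in~\eqref{aux3.0}, $h=f$ in~\eqref{aux3.1}) to force $\sigma=f$ by coercivity, after which $h=f-\varphi$ in~\eqref{aux3.1} gives the bound. That argument is purely Hilbertian: it needs only the coercivity of $a$ and the solvability of the auxiliary problem (quoted from Kinderlehrer--Stampacchia), and involves no pointwise or measure-theoretic reasoning. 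Your argument is the classical direct proof of the Lewy--Stampacchia inequality: testing with $h_\tau=(f-\tau\varphi)^+$, exploiting the sign of the quadratic remainder $-(\epsilon/\tau)\int_{\{f\le\tau\varphi\}}|\nabla f|^2$, and invoking Stampacchia's theorem ($\nabla f=0$ a.e.\ on $\{f=0\}$) together with dominated convergence on the shrinking sets $\{0<f\le\tau\varphi\}$. I checked the key identity $h_\tau-f=(\tau\varphi-f)^+-\tau\varphi$, the membership $h_\tau\in H^+$ (the traces are unaffected because $\varphi$ has compact support), and both limit passages; they are all sound. What your route buys is self-containedness (no auxiliary solvability result needed) and in fact a slightly sharper, localized conclusion, namely $\langle Af-w,\varphi\rangle\le-\int_{\{f=0\}}w\varphi$, which you then relax to $\langle w^-,\varphi\rangle$; what the paper's route buys is the complete avoidance of a.e.\ chain-rule and coincidence-set arguments, at the cost of invoking an existence theorem for the dual obstacle problem.
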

\begin{proof} The first inequality is obtained by choosing $h=f+\varphi$ with $\varphi$ arbitrary in $C^\infty_c(\Omega)^+$.  For the second inequality, we consider the solution $\sigma$ of the following variational problem:
\begin{equation}
\label{aux3.1}
\begin{cases}
\sigma\in H,\ \sigma\le f\\
a(\sigma,h-\sigma)\ge \langle w^+, h-\sigma\rangle\quad\forall h\in H\mbox{ such that }h\le f.
\end{cases}
\end{equation}
The existence of $\sigma$ is standard (see, for instance,~\cite{Kinderlehrer_Stampacchia}). We will prove that 
\begin{equation}
\label{aux3.2}
\sigma=f.
\end{equation}
Then, choosing $h=f-\varphi$ in~\eqref{aux3.1}, with $\varphi$ arbitrary in $C^\infty_c(\Omega)^+$, we find
$$a(f,-\varphi)\ge \langle w^+,-\varphi\rangle,$$
which is the second expected inequality. 

In order to prove~\eqref{aux3.2}, we first show that $\sigma\ge 0$ a.e. in $\Omega$.  Since $f\ge 0$ and $\sigma\le f$, we have $\sigma^+\le f$. We can therefore choose $h=\sigma^+$ in~\eqref{aux3.1}, and we obtain 
$$a(\sigma,\sigma^-)\ge \langle w^+,\sigma^-\rangle.$$
Since $a(\sigma^+,\sigma^-)= 0$, this implies
$$-a(\sigma^-,\sigma^-)\ge \langle w^+,\sigma^-\rangle\ge 0,$$
and so $\sigma^-=0$ by coercivity of $a$. 

Thus, $\sigma \ge 0$ a.e. in $\Omega$, and  we can choose $h=\sigma$ in~\eqref{aux3.0}. This yields
$$a(f,\sigma-f)\ge \langle w,\sigma-f\rangle,$$
and so 
$$a(f-\sigma,\sigma-f)\ge \langle w,\sigma-f\rangle+a(\sigma,f-\sigma)\ge \langle w^-,f-\sigma\rangle\ge 0,$$
where we used~\eqref{aux3.1} with $h=f$. By coercivity of $a$ again, we obtain~\eqref{aux3.2}. 
\end{proof}

We can now state our regularity result. The space $W^{2,p}(\Omega)$ is the $L^p(\Omega)$-Sobolev space~\cite{Brezis}, and $C^1(\overline{\Omega})$ denote the space of functions $f$ which are continuously differentiable in $\Omega$ and such that $f$ and $\nabla f$  are uniformly continuous in $\Omega$.  
\begin{theo}
\label{theoreg}
The solution $f^{\epsilon,v}$ of problem $\mathcal{P}^{\Lambda,\epsilon}$ belongs to $W^{2,p}(\Omega)$ for all $1\le p<\infty$. In particular, $f^{\epsilon,v}\in C^1(\overline{\Omega})$. 
\end{theo}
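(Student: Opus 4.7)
The strategy is to combine the Euler--Lagrange inequality from Proposition~\ref{propEL} with the abstract regularity result of Theorem~\ref{theoreg1}, and then to invoke standard elliptic regularity for $-\epsilon\Delta$ on the rectangle $\Omega$ with the mixed Dirichlet--Neumann boundary conditions encoded by the space $H$.

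First I rewrite the variational inequality of Proposition~\ref{propEL} in the form $a(f,h-f)\ge \langle w,h-f\rangle$ for all $h\in H^+$, by setting
\[
w(x,z) := C - \int_\Omega k(x,z,x',z')\,f(x',z')\,dx'dz'.
\]
The kernel $k$ is uniformly bounded on $\Omega\times\Omega$ by $M:=\frac{4\rho g v^3}{\pi}\int_1^\Lambda\lambda^2\,d\mu(\lambda)<+\infty$ (since $|\cos|\le 1$, $e^{-\lambda^2 v(z+z')}\le 1$, and $\mu$ is finite on $[1,\Lambda]$), so $|Kf(x,z)|\le M\,\norm{f}_{L^1(\Omega)}$ pointwise and $w\in L^\infty(\Omega)\subset L^p(\Omega)$ for every $p<\infty$. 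Theorem~\ref{theoreg1} then yields $w\le Af\le w^+$ in the sense of distributions on $\Omega$, which forces $Af\in L^\infty(\Omega)$ (a distribution squeezed between two $L^\infty$ functions is itself $L^\infty$). Since for every $\varphi\in C^\infty_c(\Omega)$ one has $\langle Af,\varphi\rangle=a(f,\varphi)=-\epsilon\int_\Omega(\Delta f)\varphi$, this reads $-\epsilon\Delta f\in L^p(\Omega)$ in the distributional sense for all $p<\infty$, supplemented by the Dirichlet condition $f=0$ on $\{|x|=L/2\}\cup\{z=T\}$ and by the natural (homogeneous) Neumann condition $\partial_z f=0$ on $\{z=0\}$ built into $H$.

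Classical $L^p$-elliptic regularity for the Laplacian on the rectangle $\Omega$ with these mixed boundary conditions then gives $f\in W^{2,p}(\Omega)$ for all $1\le p<\infty$, after which the Sobolev embedding $W^{2,p}(\Omega)\hookrightarrow C^1(\overline{\Omega})$, valid for any $p>2$, completes the proof. The delicate point of this last step is that $\Omega$ is only a polygon and the type of the boundary condition changes at the two upper corners $(\pm L/2,0)$; this is where I expect the main technical work. It is handled by Grisvard's theory for elliptic boundary-value problems on polygonal domains: at a corner of opening $\omega=\pi/2$ where Dirichlet meets Neumann, the singular exponents are $(2k+1)\pi/(2\omega)=2k+1$, and at the two pure Dirichlet corners $(\pm L/2,T)$ they are $2k$. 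In every case the associated singular eigenfunctions are smooth, so no singular part obstructs $W^{2,p}$-regularity, and the full conclusion of the theorem follows.
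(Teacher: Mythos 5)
Your proof follows the paper's argument essentially verbatim: you pass from Proposition~\ref{propEL} to the inequality $a(f,h-f)\ge\langle w,h-f\rangle$ with the same bounded $w$, apply Theorem~\ref{theoreg1} to conclude $Af\in L^\infty(\Omega)$, and then invoke Grisvard's $L^p$-regularity for the Laplacian on a rectangle with mixed Dirichlet--Neumann conditions followed by the Sobolev embedding $W^{2,p}(\Omega)\hookrightarrow C^1(\overline{\Omega})$ for $p>2$. Your additional discussion of the corner singular exponents is correct and simply makes explicit what the paper delegates to the cited results of Grisvard.
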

\begin{proof}By Proposition~\ref{propEL}, the solution $f\equiv f^{\epsilon,v}$ satisfies~\eqref{aux3.0} with $w$ defined by
$$w(x,z)=C-\int_\Omega k(x,z,x',z')f(x',z')dx'dz'.$$
In particular, $w$ belongs to $L^\infty(\Omega)$ with 
$$\|w\|_{L^\infty(\Omega)}\le |C|+\|k\|_{L^\infty(\Omega\times\Omega)}|\Omega|^{1/2}\|f\|_{L^2(\Omega)}<+\infty ,$$
since
$$\|k\|_{L^\infty(\Omega\times\Omega)}\le \frac{4\rho g v^3}{\pi}\int_1^\Lambda \lambda^2 d\mu(\lambda)<+\infty.$$
Thus, by Theorem~\ref{theoreg1}, $Af=\tilde{w}$ with $w\le \tilde{w}\le w^+$, so $\tilde{w}\in L^\infty(\Omega)$. 
We can use the regularity of the Laplacian on a rectangle with  Dirichlet boundary condition on three sides and  Neumann boundary condition on one side (Lemma~4.4.3.1 and Theorem~4.4.3.7 in~\cite{Grisvard}): we conclude that the solution $f\in H$ of $Af=\tilde{w}$ belongs to $W^{2,p}(\Omega)$ for all $1\le p<\infty$. For $p$ large enough, we have the Sobolev injection $W^{2,p}(\Omega)\subset C^1(\overline{\Omega})$~\cite{Adams}, and this concludes the proof. 
\end{proof}
\begin{rem}
The global regularity result obtained in Theorem~\ref{theoreg} is optimal because the domain $\Omega$ is a rectangle, so that even for the unconstrained problem, we do not expect a better global regularity in general~\cite{Grisvard}. However, in the open set $\{f^{\epsilon,v}>0\}$, the function $f^{\epsilon,v}$ is obviously $C^\infty$, by a classical bootstrap argument~\cite{Brezis}; otherwise, $f^{\epsilon,v}$ has the $C^{1,1}_{loc}(\Omega)$ regularity which is optimal for obstacle-type problems~\cite{Petrosyan_etal}. 
\end{rem}

\subsection{Three remarks on the limit case $\epsilon=0$}
In this section, for the reader's convenience,  we recall three results from~\cite[chapter 6]{Kostyukov}, which are related to our minimization problem in the limiting case $\epsilon=0$. The first two results are due to Krein. 

We first have:
\begin{prop}
\label{prop5.1}
If the wave resistance $R^\Lambda$ is computed by the integral~\eqref{RLambda1}, then for all $v>0$ and for all $f\in C_V$, $R^\Lambda(v,f)>0$.  
\end{prop}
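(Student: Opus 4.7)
The plan is to argue by contradiction using analyticity in $\lambda$. Nonnegativity $R^\Lambda(v,f)\ge 0$ is immediate from~\eqref{RLambda1}, so I would assume $R^\Lambda(v,f)=0$ and derive a contradiction with $V>0$. Strict positivity of the density $\lambda^2/\sqrt{\lambda^2-1}$ on $(1,\Lambda]$ forces $T_f(v,\lambda)=0$ for Lebesgue-a.e.\ $\lambda\in(1,\Lambda]$.

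Next, I would upgrade this to identical vanishing of $T_f(v,\cdot)$ on all of $\Cc$. Since $\Omega$ is bounded and $\partial_x f\in L^2(\Omega)\subset L^1(\Omega)$, the kernel $e^{-\lambda^2 vz}e^{-i\lambda vx}$ is jointly analytic in $\lambda$ and, on any compact set of $\lambda$-values, uniformly bounded on $\Omega$. Differentiation under the integral sign (or Morera combined with Fubini) then shows that $\lambda\mapsto T_f(v,\lambda)$ is entire on $\Cc$. Since it vanishes on a set of positive one-dimensional measure, the identity principle gives $T_f(v,\lambda)\equiv 0$.

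The last step is to inspect the Taylor expansion at $\lambda=0$. Expanding both exponentials, I obtain $T_f(v,\lambda)=c_0+c_1\lambda+O(\lambda^2)$ with
\[
c_0=\int_\Omega \partial_x f\,dxdz,\qquad c_1=-iv\int_\Omega x\,\partial_x f\,dxdz.
\]
The trace condition $f(\pm L/2,\cdot)=0$ built into the space $H$ makes $c_0=0$ automatically; integrating by parts in $x$ with the same boundary condition gives $c_1=iv\int_\Omega f\,dxdz=ivV$. Since $v,V>0$, we have $c_1\ne 0$, which contradicts $T_f(v,\cdot)\equiv 0$ and closes the argument.

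The only mildly delicate point will be the analyticity and the application of the identity principle to a zero set of positive Lebesgue measure rather than to an open set; both are standard but deserve a line of justification via the dominated-convergence/Morera package for parameter-dependent integrals on a bounded domain, together with the fact that the zero set of a nonzero entire function is discrete.
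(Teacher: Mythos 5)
Your proof is correct, and its final step is genuinely different from the one in the paper. Both arguments begin identically: assume $R^\Lambda(v,f)=0$, use the strict positivity of $\lambda^2/\sqrt{\lambda^2-1}$ to get $T_f(v,\cdot)=0$ on $[1,\Lambda]$, and extend this to all of $\Rr$ (or $\Cc$) by analyticity of the parameter-dependent integral \eqref{defT} over the bounded domain $\Omega$. They diverge at the point where a contradiction must be extracted from $T_f(v,\cdot)\equiv 0$. The paper follows Krein's classical route: integrate by parts in $x$, multiply by $e^{-\lambda^2 a}$, integrate over $\lambda\in\Rr$ using the Gaussian Fourier transform, and conclude that $\int_\Omega f(x,z)\sqrt{\pi/(v(a+z))}\,e^{-vx^2/(4(a+z))}\,dxdz=0$, which is incompatible with $f\ge 0$ and $\int_\Omega f=V>0$. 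You instead read off the coefficient of $\lambda$ in the Taylor expansion of $T_f(v,\cdot)$ at $\lambda=0$, which after one integration by parts equals $iv\int_\Omega f=ivV\ne 0$. Your computation checks out ($e^{-\lambda^2vz}$ contributes only even powers, so the linear coefficient is exactly $-iv\int_\Omega x\,\partial_xf=ivV$), and the measure-theoretic point you flag is harmless since the zero set of a nonzero entire function is discrete. What your variant buys is twofold: it avoids the Gaussian inversion and the interchange of an integral over the unbounded $\lambda$-line, and, more interestingly, it never uses the sign condition $f\ge 0$ --- it shows that \emph{any} $f\in H$ with $\int_\Omega f\ne 0$ has positive wave resistance. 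This is consistent with, and sharpens, the paper's subsequent discussion: the zero-resistance perturbations $f=\partial^2_xh+v\partial_zh$ of \eqref{aux3.20} all satisfy $\int_\Omega f=0$. What the paper's argument buys in exchange is that it is the form of Krein's original proof and makes transparent why finite length (compact support in $x$) is essential, via the explicit positive kernel.
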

\begin{proof}Let $v>0$, $f\in C_V$ and  assume by contradiction that $R^\Lambda(v,f)=0$. Then by~\eqref{RLambda1}, $T_f(v,\lambda)=0$ for every $\lambda\in[1,\Lambda]$, and by analycity, $T_f(v,\lambda)=0$ for all $\lambda\in\Rr$. Integrating by parts with respect to $x$ and using $f(-L/2,z)=f(L/2,z)=0$,  we obtain: 
$$0=T_f(v,\lambda)=i\lambda v \int_{-L/2}^{L/2}\int_0^Tf(x,z)e^{-\lambda^2 vz}e^{-i\lambda v x}dxdz\quad (\lambda\in\Rr).$$
Next, we use that the Fourier transform of a Gaussian density is known: 
$$\int_\Rr e^{-\lambda^2 v z'}e^{-i\lambda v x}=\sqrt{\frac{\pi}{vz'}}e^{-vx^2/(4z')}\quad (z'>0).$$ 
We multiply $T_f(v,\lambda)$ by $e^{-\lambda^2 a}$ with $a>0$ and we integrate on $\Rr$. By changing the order of integration (which is possible thanks to the new term), we find: 
\begin{eqnarray*}
0&=&\int_{-L/2}^{L/2}\int_0^Tf(x,z)\left(\int_\Rr e^{-\lambda^2 v(z+a)}e^{-i\lambda v x}d\lambda\right)dxdz\\
&=&\int_{-L/2}^{L/2}\int_0^Tf(x,z)\sqrt{\frac{\pi}{v(a+z)}}e^{-vx^2/(4(a+z)}dxdz.
\end{eqnarray*}
This is possible only if $f$ changes sign, hence a contradiction. The result is proved. 
\end{proof}
As pointed out by Krein, in Proposition~\ref{prop5.1}, it is essential to assume that the ship has a finite length.   Indeed, there exists a ship of infinite length which has a zero wave resistance. 
More precisely, let $f(x,z)=g(x)h(z)$ with 
$$g(x)=\frac{2}{\pi}\frac{\sin^2(ax/2)}{ax^2} $$
for some $a>0$ and where $h(z)$ is arbitrary. Then we have
$$\int_\Rr g(x)e^{-i\lambda v x}dx=\begin{cases}
(1-|\lambda|v/a)&\mbox{ if }|\lambda|<a/v,\\
0&\mbox{ if }|\lambda|\ge a/v. 
\end{cases}$$
On the other hand, integrating by parts with respect to $x$ in the definition of $T_f$ yields
$$T_f(v,\lambda)=i\lambda v\left(\int_{\Rr}g(x)e^{-i\lambda v x}dx\right)\left(\int_{\Rr_+}h(z)e^{-\lambda^2 vz}dz\right).$$
Thus, choosing $a<v$ yields $R^\Lambda(v,f)=0$ when $R^\Lambda$ is defined by~\eqref{defRLambda}. Such a choice of $g$ can be thought of as an endless caravan of ships.

Proposition~\ref{prop5.1} requires that $f\ge 0$ on $\Omega$. If we relax this assumption, for every $v>0$, it is possible~\cite{Kostyukov} to find $f\in C^\infty_c(\Omega)$  such that $T_f(v,\lambda)=0$ for all $\lambda$. Indeed, let $h\in C^\infty_c(\Omega)$ and set $f=\partial^2_xh+v\partial_zh$. Using several integration by parts and the identity 
$$(\partial^2_x-v\partial_z)\left(e^{-\lambda^2 vz}e^{-i\lambda v x}\right)=0,$$
we obtain
\begin{eqnarray}
\label{aux3.20}
T_f(v,\lambda)&=&i\lambda v \int_{-L/2}^{L/2}\int_0^Tf(x,z)e^{-\lambda^2 vz}e^{-i\lambda v x}dxdz=0.
\end{eqnarray}
This shows that the operator  $f\mapsto T_f(v,\cdot)$ is far from being one-to-one, as confirmed by the numerical simulations (see Section~\ref{sec5.1.2}).    

\section{Numerical methods}
\label{sec4}
In this section, we focus on the discretization of the minimization problem. Recall that the regularized criterion reads
$$N^{\Lambda,\epsilon}(v,f)= R^\Lambda(v,f)+\epsilon\int_\Omega|\nabla f(x,z)|^2dxdz,$$
where $\Lambda$ is taken large enough. The set of constraints will insure the fact that:
\begin{itemize}
\item[\textbullet] the volume of the (immerged) hull is given: $$\int_{\Rr^2} f(x,z) dxdz = V;$$
\item[\textbullet] the hull does not cross the center plane: $f(x,z) \geq 0$; 
\item[\textbullet] the hull is contained in a finite domain given by a box $\Omega=[-L/2,L/2] \times [0,T]$, where: $f(-L/2,\cdot)=f(L/2,\cdot)=f(\cdot,T)=0$.
\end{itemize}
The first constraint is an important one, since if no volume was imposed for the hull, the optimal solution to our problem would be $f=0$, for all target velocities $v$.

\subsection{A $Q^1$ finite element discretization}
\label{subsec4.1}
We adopt here a finite element approach in the sense that the optimal shape $f$ will be sought in a finite dimensional subspace 
$$V^h \subset H\subset H^1(\Omega).$$ 
We use a cartesian grid which divides the domain $\Omega=(-L/2,L/2)\times (0,T)$  into $N_x\times N_z$ small rectangles of size $\delta x\times \delta z$, where $\delta x=L/N_x$ and $\delta z=T/N_z$. We choose to represent the surface with the help of $Q^1$ finite-element functions: for every node $(x_i,z_i)$ of the grid, we define the ``hat-function''
\begin{align}
& e_i(x,z)=\dfrac{(x-(x_i- \delta x)) (z-(z_i- \delta z))}{\delta x \delta z} \, ,  \quad \text{for } (x,z) \in [x_i- \delta x , x_i] \times [z_i -\delta z, z_i]  \, , \notag \\
& e_i(x,z)=\dfrac{((x_i+ \delta x)-x) ((z_i+ \delta z)-z)}{\delta x \delta z} \, ,  \quad \text{for }  (x,z) \in [x_i , x_i+\delta x] \times [z_i, z_i +\delta z]  \, , \notag \\
& e_i(x,z)=\dfrac{(x-(x_i- \delta x)) ((z_i+ \delta z)-z)}{\delta x \delta z} \, ,  \quad  \text{for } (x,z) \in [x_i - \delta x , x_i] \times [z_i, z_i +\delta z]  \, , \notag \\
& e_i(x,z)=\dfrac{((x_i+ \delta x)-x) (z-(z_i- \delta z))}{\delta x \delta z} \, ,  \quad  \text{for } (x,z) \in [x_i , x_i + \delta x] \times [z_i - \delta z  , z_i]  \, , \notag \\
&e_i(x,z)=0,\quad\text{otherwise}. 
\label{defei}
\end{align}
Let us denote :
\begin{align}
& \mathcal{X}_i^+= [x_i , x_i+\delta x]  \, ,  \\
& \mathcal{X}_i^-= [x_i - \delta z , x_i]  \, ,  \\
& \mathcal{Z}_i^+=  [z_i, z_i +\delta z] \, ,  \\
& \mathcal{Z}_i^-=  [z_i - \delta z, z_i] \, .  
\end{align}
We can recast $e_i$ in the following manner, which is useful for further calculations:
\begin{equation}
e_i(x,z)= \dfrac{1}{\delta x \, \delta z}  a_i(x) \, b_i(z)  \, , \label{base}
\end{equation}
where:
\begin{align}
& a_i (x) =  ((x_i+ \delta x)-x) \mathbbm{1}_{\mathcal{X}_i^+}(x) +  (x-(x_i- \delta x)) \mathbbm{1}_{\mathcal{X}_i^-}(x)  \, ,  \label{base1}\\ 
& b_i (z) = ((z_i+ \delta z)-z) \mathbbm{1}_{\mathcal{Z}_i^+}(z) +  (z-(z_i- \delta z)) \mathbbm{1}_{\mathcal{Z}_i^-}(z)  \, ,  \label{base2}
\end{align}
where $\mathbbm{1}_A$ is the indicator function of the set $A$ (which is one in $A$ and zero outside of $A$). \par

In order to set $f(-L/2,\cdot)=f(L/2,\cdot)=f(\cdot,T)=0$ once and for all, we only keep the hat-functions which correspond to interior nodes or to nodes $(x_i,z_i)$ such that  $z_i=0$, $x_i\in(-L/2,L/2)$ (i.e. nodes on the upper side of $\Omega$).  These hat-functions are indexed from $1$ to $N_{int}$ (with $N_{int}=(N_x-1) (N_z-1)$) for  the interior nodes and from $N_{int}+1$ to $N=N_{int}+N_x-1$ for the $N_x-1$ nodes of the upper side. 

The functions  $\{e_i(x,z)\}_{i=1...N}$ are a basis of $V^h$, so that the hull surface is  represented by:
\begin{equation}
  \label{num-1}
f(x,z)=\somme{i=1}{N}{f_i e_i(x,z)},
\end{equation}
This identifies the space $V^h$  to $\Rr^N$, and in all the following we will denote $F=(f_i)_{i=1..N}$ the (column) vector in $\Rr^N$ corresponding to $f(x,z)$. 

 The  other two constraints described earlier read:
\begin{itemize}
\item[\textbullet] the volume of the hull is given: 
$$\sum_{i=1}^{N_{int}}{f_i}+\frac{1}{2}\sum_{i=N_{int}+1}^{N}f_i= \tilde{V},$$
where $\tilde{V}=V/(\delta x\delta z)$;
\item[\textbullet] the hull does not cross the center plane: $f_i \geq 0$ for $i=1\ldots N$.
\end{itemize}
Remark that, from a  geometrical point of view, this set of constraints can be seen as a (N-1)-dimensional simplex.
\subsection{Approximation of the wave resistance}
First, let us recall the expression of Michell's wave resistance as a function of the hull shape. Since the optimal ship has to be symmetric with respect to $x$ (see Theorem~\ref{theowellposed}), we drop the antisymmetric contribution $I$ of the hull on the wave resistance:  
$$R_{Michell}=\frac{4\rho g^2}{\pi U^2}\int_1^\Lambda J(\lambda)^2 \frac{\lambda^2}{\sqrt{\lambda^2-1}}d\lambda,$$
with
\begin{equation}
\label{Jbis}
J(\lambda)=\int_{-L/2}^{L/2}\int_0^T\frac{\partial f(x,z)}{\partial x}\exp\left(-\frac{\lambda^2 gz}{U^2}\right)\sin\left(\frac{\lambda g x}{U^2}\right)dxdz.
\end{equation}
Integrating by parts in~\eqref{Jbis}, and denoting $v=g/U^2$, we obtain the simpler expression
$$R_{Michell} = \dfrac{4 \rho g v^3}{\pi} \integ{1}{\Lambda}{ \tilde{J}^2(\lambda) \dfrac{\lambda^4}{\sqrt{\lambda^2-1}} \mathrm{d} \lambda} \, ,$$
with
\begin{equation}
\tilde{J}(\lambda) = \integ{\mathbb{R}\times \mathbb{R}^+}{}{f(x,z) \, e^{- \lambda^2 v z} \, \cos(\lambda v x) \, \mathrm{d}x \mathrm{d}z} \, , 
\end{equation}
Since $R_{Michell}$ is a quadratic form with respect to $f$, when $f$ is given as~\eqref{num-1}, the expression of the wave  resistance reads
\begin{equation}
\label{RMh}
R_{Michell}= \dfrac{4 \rho g v^3}{\pi} F ^t \, M_w \, F   \, ,
\end{equation}
where $F^t$ denotes the transpose of the vector $F$. 
Simple calculations give us the $N \times N$ matrix $M_w$:
\begin{equation}
M_w = \integ{1}{\Lambda }{ \mathcal{J}(\lambda)\mathcal{J}(\lambda)^t \, \dfrac{\lambda^4}{\sqrt{\lambda^2-1}} \mathrm{d} \lambda} \, , \label{num-2}
\end{equation}
where  $\mathcal{J}(\lambda)$ is the (column) vector of $\Rr^N$ given by
\begin{equation}
(\mathcal{J}(\lambda))_i =  \integ{\mathbb{R}\times \mathbb{R}^+}{}{e^{- \lambda^2 v z} \, \cos(\lambda v x) \, e_i(x,z)  \, \mathrm{d}x \mathrm{d}z } \, , \label{num-2pp}
\end{equation}
for $i=1\ldots N$. Every basis function $e_i$ is the product of a polynomial in $x$ by a polynomial in $z$ on every one of the cells (see~\eqref{defei}), so one can compute exactly the values of $\mathcal{J}(\lambda)$. Injecting~\eqref{base} into~\eqref{num-2pp}, we obtain :
\begin{equation}
(\mathcal{J}(\lambda))_i =  \integ{\mathbb{R}\times \mathbb{R}^+}{}{e^{- \lambda^2 v z} \, \cos(\lambda v x) \, a_i(x) \, b_i(z)  \, \mathrm{d}x \mathrm{d}z } \, .
\end{equation}
Hence our integral can be written as a product of two independent integrals:
\begin{equation}
(\mathcal{J}(\lambda))_i =  \integ{\mathbb{R}}{}{ \cos(\lambda v x) \, a_i(x)  \, \mathrm{d}x}  \integ{ \mathbb{R}^+}{}{e^{- \lambda^2 v z} \, b_i(z)  \, \mathrm{d}z } \, .
\end{equation}
From~\eqref{base1} and~\eqref{base2}, we remark that each integral is the sum of two terms:
\begin{align}
&\integ{\mathbb{R}}{}{ \cos(\lambda v x) \, a_i(x)  \, \mathrm{d}x} = a_i^+ + a_i^- \, ,  \\
&\integ{ \mathbb{R}^+}{}{e^{- \lambda^2 v z} \, b_i(z)  \, \mathrm{d}z } =  b_i^+ + b_i^- \, , 
\end{align}
where:
\begin{align}
& a^+_i = \integ{x_i}{x_i+\delta x}{ \cos(\lambda v x) \,  ((x_i+ \delta x)- x)  \, \mathrm{d}x} \, , \\
& b^+_i = \integ{ z_i}{z_i+\delta z}{e^{- \lambda^2 v z} \, ((z_i+ \delta z)- z) \, \mathrm{d}z }\, , \\
& a^-_i =  \integ{x_i-\delta x}{x_i}{ \cos(\lambda v x) \,  (x-(x_i- \delta x))  \, \mathrm{d}x} \, , \\
& b^-_i = \integ{ z_i- \delta z}{z_i}{e^{- \lambda^2 v z} \, (z-(z_i- \delta z)) \, \mathrm{d}z } \, .
\end{align}
Hence our vector $\mathcal{J}(\lambda)$ writes:
\begin{equation}
(\mathcal{J}(\lambda))_i = \dfrac{1}{\delta x \, \delta z}(a^+_i + a^-_i)(b^+_i +b^-_i) \, .
\end{equation}
Elementary yet tedious calculations give us the values for the integrals $a_i^+$, $b_i^+$, $a_i^-$ and $b_i^-$:
\begin{align}
& a^+_i =\frac{1}{v^2 \lambda^3}\left\{ - \delta x \, \sin(\lambda v x_i) + \frac{1}{\lambda v}(\cos(\lambda v x_i)-\cos(\lambda v (x_i+\delta x) ))\right\} \; ,\label{aip} \\
& b^+_i =\frac{1}{v^2 \lambda^3}\left\{ \delta z \, e^{- \lambda^2 v z_i} - \frac{1}{\lambda^2 v}(e^{-\lambda^2 v z_i}-e^{-\lambda^2 v( z_i + \delta z)}) \right\}\; , \\
& a^-_i = \frac{1}{v^2 \lambda^3}\left\{ \delta x \, \sin(\lambda v x_i) + \frac{1}{\lambda v}(\cos(\lambda v x_i)-\cos(\lambda v (x_i-\delta x) ))\right\} \; , \\
& b^-_i = \frac{1}{v^2 \lambda^3}\left\{- \delta z \, e^{- \lambda^2 v z_i} - \frac{1}{\lambda^2 v}(e^{- \lambda^2 v z_i}-e^{- \lambda^2 v( z_i - \delta z)}) \right\}\, . \label{bim}
\end{align}
Moreover, $b_i^-=0$ if $z_i=0$. 

Let us now describe the method employed to approximate the integral with respect to $\lambda$ which appears in~\eqref{num-2}. In~\cite{Tarafder}, Tarafder \textit{et. al.} described an efficient method in order to compute this integral. In order to get rid of the singular term for $\lambda=1$, the integral is transformed in the following manner:
\begin{align}
M_w &= \integ{1}{\Lambda }{ \mathcal{J}(\lambda)\mathcal{J}(\lambda)^t \, \dfrac{\lambda^4}{\sqrt{\lambda^2-1}} \mathrm{d} \lambda} \\
&= \integ{1}{2}{ \mathcal{J}(\lambda)\mathcal{J}(\lambda)^t \, \dfrac{\lambda^4}{\sqrt{\lambda^2-1}} \mathrm{d} \lambda} + \integ{2}{\Lambda}{ \mathcal{J}(\lambda)\mathcal{J}(\lambda)^t \, \dfrac{\lambda^4}{\sqrt{\lambda^2-1}} \mathrm{d} \lambda} \\
&= \mathcal{J}(1)\mathcal{J}(1)^t \integ{1}{2}{\dfrac{1}{\sqrt{\lambda^2-1}}}\mathrm{d}\lambda +  \integ{1}{2}{ \dfrac{ \lambda^4 \, \mathcal{J}(\lambda)\mathcal{J}(\lambda)^t -\mathcal{J}(1)\mathcal{J}(1)^t}{\sqrt{\lambda^2-1}} \mathrm{d} \lambda} \nonumber\\
& \hspace{6cm}+ \integ{2}{\Lambda}{ \mathcal{J}(\lambda)\mathcal{J}(\lambda)^t \, \dfrac{\lambda^4}{\sqrt{\lambda^2-1}} \mathrm{d} \lambda}
\end{align}
The first integral can be computed explicitly:
\begin{equation}
\mathcal{J}(1)\mathcal{J}(1)^t \integ{1}{2}{\dfrac{1}{\sqrt{\lambda^2-1}}}\mathrm{d}\lambda= \ln(2+\sqrt{3}) \, \mathcal{J}(1)\mathcal{J}(1)^t \, . \label{num-3}
\end{equation}
The second integral, which is not singular anymore, is computed with a second order midpoint approximation formula:
\begin{equation}
\integ{1}{2}{ \dfrac{ \lambda^4 \, \mathcal{J}(\lambda)\mathcal{J}(\lambda)^t -\mathcal{J}(1)\mathcal{J}(1)^t}{\sqrt{\lambda^2-1}} \mathrm{d} \lambda} \approx \somme{i=1}{N_0}{\dfrac{ \lambda_{i,0}^4 \, \mathcal{J}(\lambda_{i,0})\mathcal{J}(\lambda_{i,0})^t -\mathcal{J}(1)\mathcal{J}(1)^t}{\sqrt{\lambda_{i,0}^2-1}} \delta \lambda_0} \, , \label{num-4}
\end{equation}
where $\delta \lambda_0=1/N_0$ and $\lambda_{i,0}=1+ \left( i+ \frac{1}{2} \right) \delta \lambda_0$ (for $i=1$,\ldots,$N_1$). Thanks to the exponential decay of $\mathcal{J}(\lambda)$ when $z_i>0$ and $z_i-\delta z>0$ (see~\eqref{aip}-\eqref{bim}),  the function under the third integral has an exponential decay for most values of $z$. Therefore,  the third integral is cut in intervals of exponentially growing lengths (we set $\Lambda=2^{K_{\Lambda}}$):
\begin{equation}
 \integ{2}{\Lambda}{ \mathcal{J}(\lambda)\mathcal{J}(\lambda)^t \, \dfrac{\lambda^4}{\sqrt{\lambda^2-1}} \mathrm{d} \lambda} = \somme{k=1}{K_{\Lambda}-1}{ \integ{2^k}{2^{k+1}}{ \mathcal{J}(\lambda)\mathcal{J}(\lambda)^t \, \dfrac{\lambda^4}{\sqrt{\lambda^2-1}} \mathrm{d} \lambda}}
\end{equation}
On each interval, the integral is computed with a second order midpoint approximation formula:
\begin{equation}
\integ{2^k}{2^{k+1}}{ \mathcal{J}(\lambda)\mathcal{J}(\lambda)^t \, \dfrac{\lambda^4}{\sqrt{\lambda^2-1}} \mathrm{d} \lambda} \approx  \somme{i=1}{N_{k}}{\mathcal{J}(\lambda_{i,k})\mathcal{J}(\lambda_{i,k})^t \dfrac{\lambda_{i,k}^4}{\sqrt{\lambda_{i,k}^2-1}} \delta \lambda_k} \, , \label{num-5}
\end{equation}
where: $\delta \lambda_k=\dfrac{2^k}{N_{k}}$, and $\lambda_{i,k}=2^k+(i+\frac{1}{2}) \delta \lambda_k$ for $i=1,\ldots,N_k$.

\begin{rem}The integration method with respect to $\lambda$ described above preserves the positivity of the operator $M_w$. From~\eqref{num-3}, ~\eqref{num-4} and~\eqref{num-5}, the approximation of $M_w$ can be written as:
\begin{equation}
M_w= \omega_0 \mathcal{J}(1)\mathcal{J}(1)^t  + \somme{j=1}{J^\star}{\omega_j \mathcal{J}(\lambda_j)\mathcal{J}(\lambda_j)^t } \label{num-6}
\end{equation}
where the sequence $(\lambda_j)$ contains all the midpoints $\lambda_{i,k}$ described above. It is clear by construction that $\omega_j > 0$ for $j\ge 1$. For $\omega_0$, the matter is less obvious, and the positivity is a consequence of the choice we made for the numerical method of integration. The coefficient $\omega_0$ reads
\begin{equation}
\omega_0=  \ln(2+\sqrt{3}) -  \somme{i=1}{N_0}{\dfrac{\delta \lambda_0}{\sqrt{\lambda_{i,0}^2-1}}} 
\end{equation}
The first term of this difference is the exact integral, and the second term is the approximate integral. When we deal with the integral of convex functions, the approximate integral computed with the  midpoint approximation is always lower than the exact integral. Since $\lambda \rightarrow \dfrac{1}{\sqrt{\lambda^2-1}}$ is convex for $\lambda>1$, we have $\omega_0 >0$. Hence, $M_w$ is positive (semi-definite, see Figure~\ref{valp}).
\end{rem}

\subsection{Approximation of the viscous resistance}
Let us give the expression of the additional ``viscous drag" term when $f$ is given by~\eqref{num-1}:
\begin{align}
\integ{\Omega}{}{|\nabla f |^2}{} & = ( \nabla f \; , \; \nabla f)_{L^2} \\
& = ( \somme{i=1}{N}{f_i \nabla e_i} \; , \;  \somme{i=1}{N}{f_i \nabla e_i})_{L^2} \\
& = \somme{i,j=1}{N}{f_i \, f_j ( \nabla e_i \; ,  \nabla e_j)_{L^2}}\label{Md}
\end{align}
The  computation of the matrix $M_d= (\nabla e_i \; ,  \nabla e_j)_{L^2}$ is standard~\cite{Lucquin}. This matrix is nondiagonal, symmetric positive definite.

\subsection{Method of optimization}

From~\eqref{RMh}  and~\eqref{Md}, we can recast the optimization problem  as finding $F^*$ which solves
\begin{equation}
F^*=\underset{F \in K_{\tilde{V}}}{\text{argmin}}\left\{ F^t\,  (\frac{4\rho g v^3}{\pi} M_w+\epsilon M_d) F \right\} \, ,
\end{equation}
where
\begin{equation}
K_{\tilde{V}}=\left\{ F=(f_i)_{1\le i\le N}\in \mathbb{R}^N \; : \; F \geq 0   \quad \text{and} \quad \sum_{i=1}^{N_{int}}{f_i}+\frac{1}{2}\sum_{i=N_{int}+1}^Nf_i =\tilde{V} \right\}.
\end{equation}
This problem can be reformulated as finding the saddle point $(F^*,\lambda_1^*,\lambda_2^*) \in  \mathbb{R}^N  \times (\mathbb{R}_-)^N \times \mathbb{R}$ for the following Lagrangian:
\begin{equation}
\mathcal{L}(F,\lambda_1,\lambda_2)= F^t \,  (\frac{4\rho g v^3}{\pi} M_w+\epsilon M_d) F +  \lambda_1^t F + \left(\sum_{i=1}^{N}\alpha_i{f_i} - \tilde{V} \right) \lambda_2,
\end{equation}
where $\alpha_i=1$ for $i\in\{1,\ldots,N_{int}\}$ and $\alpha_i=1/2$ otherwise. 

The method we used in order to find this saddle point is the Uzawa algorithm~\cite{Ciarlet}. Given $(F^n,\lambda_1^n,\lambda_2^n)$, we find $(F^{n+1},\lambda_1^{n+1},\lambda_2^{n+1})$ in the following manner:
\begin{itemize}
\item First, we obtain $F^{n+1}$ by minimizing $\mathcal{L}(F,\lambda_1^n,\lambda_2^n)$ with respect to $F$ in $\mathbb{R}^N$, which is equivalent to: 
\begin{equation}
F^{n+1}= ((\frac{4\rho g v^3}{\pi}M_w+\epsilon M_d)^{-1} (\lambda_1 + \lambda_2) \, , 
\end{equation}
\item then we iterate on the Lagrange multipliers with:
\begin{align}
\lambda_1^{n+1} & =\mathbb{P}_{(\mathbb{R}_-)^N}\left( \lambda_1^{n} + \delta r_1\, F \right) \,  , \\
\lambda_2^{n+1} & = \lambda_2^{n} + \delta r_2  \left(\sum_{i=1}^{N}\alpha_i{f_i} - \tilde{V} \right) \, ,
\end{align}
where $\mathbb{P}_{(\mathbb{R}_-)^N}$ denotes the projection on $(\mathbb{R}_-)^N$, $ \delta r_1$ and $ \delta r_2$ are steps that have to be taken small enough in order to insure convergence, and large enough in order to insure fast convergence.
\end{itemize}
When this algorithm has converged (\textit{i.e.} $(F^{n+1}-F^n,\lambda_1^{n+1}-\lambda_1^n,\lambda_2^{n+1}-\lambda_2^n)$ small enough for some norm), the saddle point is reached.

\section{Numerical results and their interpretation}
\label{sec5}
In this section, we  perform hull optimization with the method described above.  We first describe the necessity of adding a coercive term in our optimization criterion, and then we give some optimized hulls obtained for moderate Froude numbers. 

We used the following set of parameters, which could correspond  to an experiment in a towing basin:  $\rho=1000\,\mathrm{kg}\cdot \mathrm{m}^{-3}$,  $g=9.81\,\mathrm{m}\cdot\mathrm{s}^{-2}$, $L=2\, \mathrm{m}$, $T=20\, \mathrm{cm}$, $V= 0.03\,  \mathrm{m}^3$. 

The space discretization parameters are $N_x=100$ and $N_z=20$ (except in Figure~\ref{valp} where $N_x=100$ and $N_z=30$). These values are taken as a compromise between the computational cost and the accuracy we seek. We remark that since $M_w$ is obtained as the product of two vectors with $N_x \times N_z$ entries, this matrix is a full matrix with $(N_x \times N_z)^2$ non-zero entries. this means that the memory cost is $O((N_x \times N_z)^2)$ (instead of $O(N_x \times N_z)$ for a sparse problem). \par
We remind that $\tilde{V}=V/(\delta x\delta z)$ with $\delta x=L/N_x$ and $\delta z=T/N_z$. The parameters $N_0$, \ldots, $N_{K_\Lambda}$  used in the numerical integration (see~\eqref{num-4}-\eqref{num-5}) are all equal to $80$. The integer $K_\Lambda$ is determined by a stopping criterion  ($K_\Lambda$ is generally around $10$).

The  velocity is given by the length Froude number:
\begin{equation}
\label{defFroude}
Fr=\dfrac{U}{\sqrt{gL}} \, ,
\end{equation} 
and we remind that in our notations, $v=g/U^2$. Our $Q^1$ discretized wave resistance formula~\eqref{RMh} has been validated by comparison to some tabulated results obtained by Kirsch~\cite{Kirsch}   for a hull of longitudinal parabolic shape with a rectangular cross-section (rectangular Wigley hull). We used the \texttt{Scilab} software for the computations and the \texttt{Matlab}\footnote{\textsf{http://www.mathworks.fr/}} software for the figures. 
\subsection{Degenerate nature of the wave resistance criterion for optimization}
\subsubsection{Letting $\epsilon$ tend to $0$}
Let us examine the numerical results of the optimization problem:
\begin{equation}
F^*=\underset{F \in K_{\tilde{V}}}{\text{argmin}}\left\{ F^t\, (\frac{4\rho g v^3}{\pi}M_w+\epsilon M_d) F \right\} \, , \label{eqres-1}
\end{equation}
for smaller and smaller values of $\epsilon$, with $Fr=1$.  In Figure~\ref{res-1} we notice that, as $\epsilon$ gets small ($\epsilon$ is expressed in $\textrm{Pa}$), the optimized hull does not seem to converge towards a limit. In fact most of the hull's volume   tends to accumulate on the edges of the domain boundaries, where $f=0$ is imposed. 
\begin{center}
\begin{figure}[!ht]
\includegraphics[width=10cm]{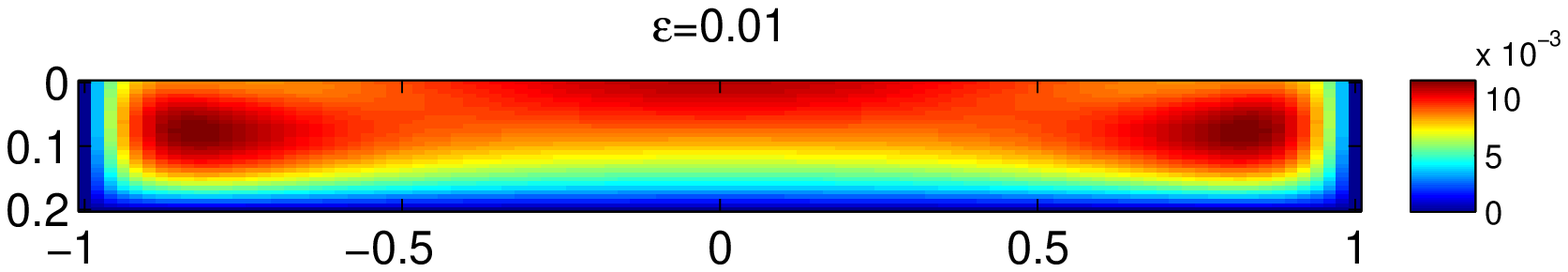}\\
\includegraphics[width=10cm]{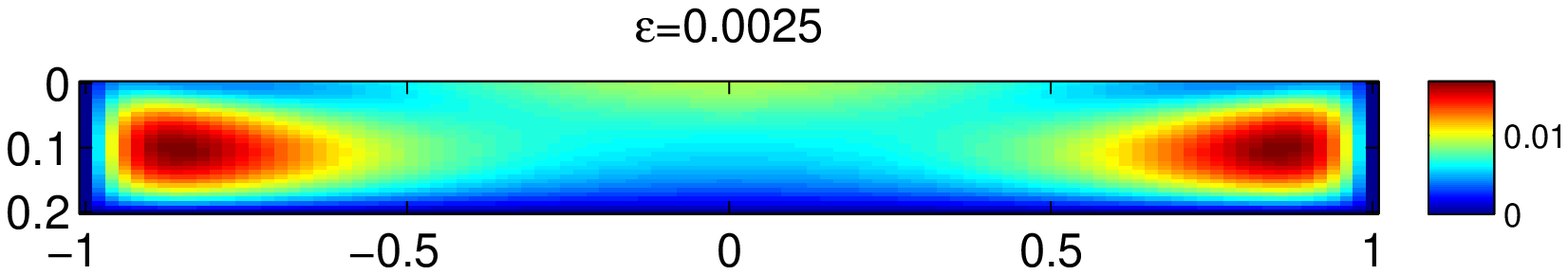} \\
\includegraphics[width=10cm]{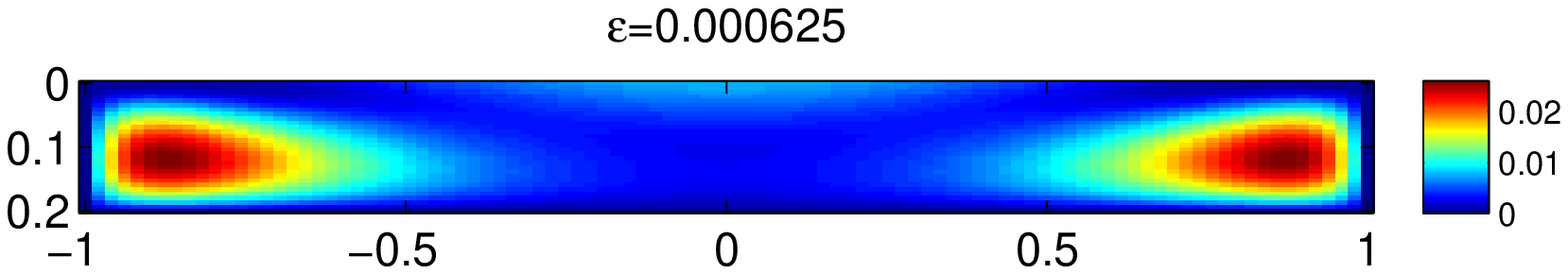} \\
\includegraphics[width=10cm]{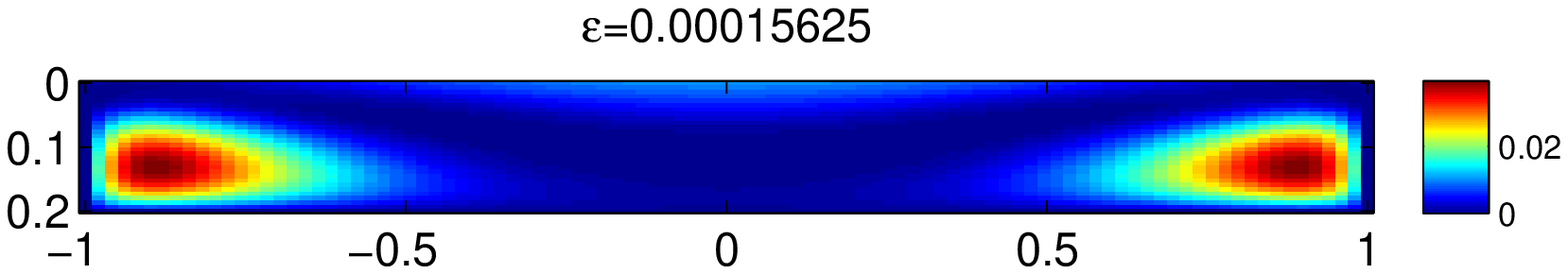} 
\caption{Color maps of the optimized hull function $f(x,z)$ for smaller and smaller values of $\epsilon$. \label{res-1}}
\end{figure}
\end{center}
Note that this phenomenon is very similar to a boundary layer phenomenon. Let us take the characteristic width of the boundary layer as the distance between the left border of the domain and the center of mass $(\bar{x},\bar{y})$ of the half hull. 
\begin{center}
\begin{figure}[!ht]
\includegraphics[width=8cm]{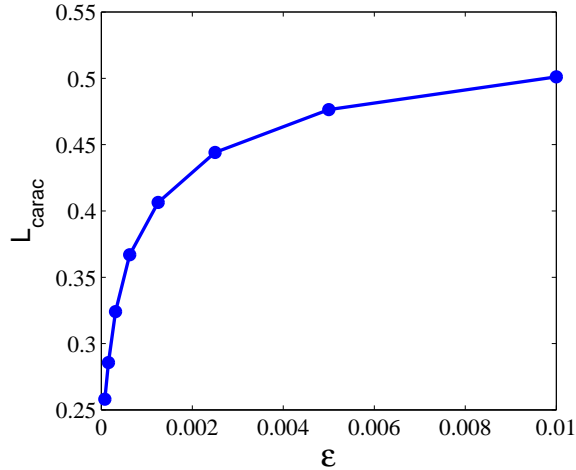}\\
\caption{Characteristic width of the boundary layer as a function of $\epsilon$. \label{res-2}}
\end{figure}
\end{center}
The characteristic width of the boundary layer (see Figure~\ref{res-2}) seems to fit a law of the type
\begin{equation}
L_{carac}(\epsilon) \sim \epsilon^{0.15} \, .
\end{equation}
This phenomenon suggests that the optimization problem $\mathcal{P}^{\Lambda,\epsilon}$  is ill-posed when $\epsilon=0$. 
\begin{rem} 
\label{rem5.1}
 In finite dimension, the problem 
$$F^\star\in \mbox{argmin}_{F\in K_{\tilde{V}}}F^t\, M_w\,F$$
has at least one solution, because $K_{\tilde{V}}$, being a simplex,  is a compact subset of $\Rr^N$.  The existence of such a solution  is due to the discretization. 
\end{rem}
\subsubsection{About the eigenvalues of  $M_w$ (numerics)}
\label{sec5.1.2}
Let us consider once again (for a fixed $v>0$) the operator $f\mapsto T_f(v,\cdot)$ (see~\eqref{defT}) which appears in the definition of $R^{\Lambda}(v,f)$~\eqref{defRLambda}.  We have seen that $T_f$ is not invertible (cf.~\eqref{aux3.20}). This (linear) operator transforms a  function of two variables, $f(x,z)$,  into a function of one variable, $\lambda$. Roughly speaking,  we ``loose'' one dimension in the process, and  this is the reason why the wave resistance $R^\Lambda$ alone is not suited for minimization.

This  is confirmed by numerical computation of the eigenvalues of the matrix $M_w$ (see Figure~\ref{valp}, where the Froude number is equal to $1$). Since $M_w$ is symmetric, up to a change of orthonormal basis, $M_w$ is equal to a diagonal matrix formed with its eigenvalues. Recall now that $M_w$ represents (up to a constant factor) the restriction of $R^\Lambda$ to the space $V^h$ (we omit here the fact that $M_w$ contains only the cosin term). In Figure~\ref{valp}, $N_x=100$ and $N_z=30$, so there are $N\approx 3000$ degrees of freedom,  but there are less than 200  positive eigenvalues (for an index $i\ge 200$, the eigenvalue satisfies $|\lambda_i|<10^{-15}$, which is the double precision accuracy; for $i\ge 1600$,  we have $\lambda_i=0$ up to computer accuracy, so that $\lambda_i$ is not represented in the logarithmic scale). Corollary~\ref{coro5.3} below provides  a theoretical lower bound ($N_x-1=99$) concerning the number of positive eigenvalues. 

In other words, Figure~\ref{valp} shows that only a few degrees of freedom are necessary in order to minimize efficiently the wave resistance. In such a case, existence of a solution  to the minimum wave resistance problem is a consequence of the discretization (see Remark~\ref{rem5.1}). This is an approach that has been used by many authors~\cite{Gotman,Guilloton,Higuchi_Maruo_1979,Hsiung_1981,Hsiung_Shenyyan_1984,Kostyukov,Michalski_etal,Pavlenko,Weinblum}. In contrast, with our approach, we do not need to impose ``a priori'' the set of parameters: the interesting degrees of freedom are selected when minimizing the total resistance. 

\subsubsection{About the eigenvalues of $M_w$ (analysis)}
\begin{center}
\begin{figure}[!ht]
\psfrag{i}{$i$}
\psfrag{e}{$|\lambda_i|$}
\includegraphics[width=8cm]{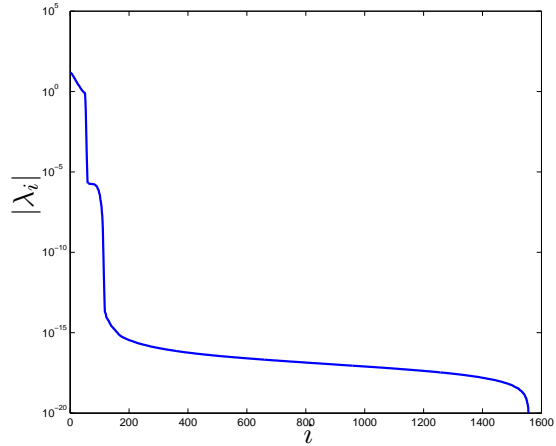}\\
\caption{\label{valp}Eigenvalues of $M_w$}
\end{figure}
\end{center}

Here, we provide a theoretical lower bound for the number of positive eigenvalues. 
  First, we notice that the operator $T_f$ can be seen as the composition of a Fourier transform in $x$ by a modified Laplace transform in $z$. More precisely, for $\varphi\in L^1(\Rr)$, let 
$$\mathcal{F}_x(\varphi)(\xi):=\int_\Rr\e^{-i\xi x}\varphi(x)dx\quad (\xi\in\Rr)$$
 be the Fourier transform of $\varphi$, and  for  $\chi\in L^1(\Rr_+)$,   let 
$$\mathcal{L}_z(\chi)(s):=\int_{\Rr_+}e^{-sz}\chi(z)$$
 be the Laplace transform of $\chi$, which is defined for all $s\in\Cc$ such that $\Re(s)\ge 0$. 
If $f(x,z)=\varphi(x)\chi(z)$ with $\varphi\in L^1(\Rr)$ and $\chi\in L^1(\Rr_+)$, then for all $v>0$, 
\begin{equation}
\label{remT}
T_f(v,\lambda)=\mathcal{F}_x(\varphi')(\lambda v)\mathcal{L}_z(\chi)(\lambda^2 v)\quad\forall \lambda\in\Rr.
\end{equation}
As a consequence, we have:
\begin{prop}
\label{propaux}
Assume that $f\in H$ can be written $f(x,z)=\varphi(x)\chi(z)$ with $\varphi\in H^1_0(-L/2,L/2)$ and $\chi\in H^1(0,T)$. If $f\not=0$, then for all $v>0$, the function $\lambda\to T_f(v,\lambda)$ is real  analytic  on $\Rr$ and not identically zero. 
\end{prop}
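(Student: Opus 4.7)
The plan is to prove the two claims (analyticity and non-triviality) separately, with the key tool being the factorization~\eqref{remT}.

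For analyticity, I would argue directly from the definition
\[
T_f(v,\lambda)=\int_\Omega \partial_x f(x,z)\,e^{-\lambda^2 vz}e^{-i\lambda v x}\,dx\,dz,
\]
without even using the product structure. The kernel $(\lambda,x,z)\mapsto e^{-\lambda^2 vz}e^{-i\lambda vx}$ is entire in $\lambda\in\Cc$ for each $(x,z)$, and on any compact set $K\subset\Cc$ the kernel together with all its $\lambda$-derivatives is bounded uniformly on $\Omega$. Since $\partial_x f\in L^1(\Omega)$ and $\Omega$ has finite measure, dominated convergence and Morera's theorem (applied after differentiation under the integral sign) show that $\lambda\mapsto T_f(v,\lambda)$ extends to an entire function on $\Cc$; its restriction to $\Rr$ is therefore real analytic.

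For non-triviality, I would reason by contradiction using~\eqref{remT}. Set
\[
G(\lambda):=\mathcal{F}_x(\varphi')(\lambda v),\qquad H(\lambda):=\mathcal{L}_z(\chi)(\lambda^2 v),
\]
each of which is real analytic on $\Rr$ by the same argument (both $\varphi'$ and $\chi$, extended by zero, have compact support and lie in $L^1$). Assume $T_f(v,\cdot)\equiv 0$ on $\Rr$, i.e.\ $GH\equiv 0$. Since $\Rr$ is connected and $G,H$ are real analytic, the zero set of each is either all of $\Rr$ or discrete; the union of two discrete sets being discrete, the identity $GH\equiv 0$ forces $G\equiv 0$ or $H\equiv 0$.

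In the first case, $\mathcal{F}_x(\varphi')(\xi)=0$ for all $\xi\in\Rr$, so $\varphi'\equiv 0$ by Fourier inversion; as $\varphi\in H^1_0(-L/2,L/2)$ is continuous and vanishes at $\pm L/2$, this forces $\varphi\equiv 0$. In the second case, $\mathcal{L}_z(\chi)(s)=0$ for every $s\ge 0$, and injectivity of the Laplace transform on $L^1$-functions supported in $[0,T]$ yields $\chi\equiv 0$. Either way, $f=\varphi\chi\equiv 0$, contradicting the hypothesis $f\ne 0$. The only mildly delicate point—where care is needed in the write-up—is the dichotomy ``$GH\equiv 0$ implies $G\equiv 0$ or $H\equiv 0$'' for real analytic functions on the connected set $\Rr$; everything else is a straightforward application of standard transforms and differentiation under the integral sign.
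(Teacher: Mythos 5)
Your proposal is correct and follows essentially the same route as the paper: both rest on the factorization~\eqref{remT}, the holomorphy of the Fourier and Laplace transforms of compactly supported $L^1$ functions, their injectivity, and the fact that a nonzero real analytic function on $\Rr$ has isolated zeros. The only cosmetic difference is that you argue by contradiction (if $T_f(v,\cdot)\equiv 0$ then one factor vanishes identically), whereas the paper states the positive version directly (each factor is holomorphic and not identically zero, hence has isolated roots, hence so does the product).
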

\begin{proof}Since $\varphi'\in L^1(-L/2,L/2)$, and since the kernel $(x,\xi)\mapsto \e^{-i\xi x}$ is holomorphic with respect to $\xi\in\Rr$ and uniformly bounded for $\xi$ in a compact subset of $\Cc$ and $x\in[-L/2,L/2]$, by standard results, the Fourier transform $\xi\mapsto\mathcal{F}_x(\varphi')(\xi)$ is holomorphic on $\Cc$ (where $\varphi'$ is extended by $0$ on $\Rr$).  The assumptions on $f$ and $\varphi$ imply that $\varphi'\not=0$; by injectivity of the Fourier transform on $L^1(\Rr)$, we have $\mathcal{F}_x(\varphi')\not=0$. Similarly, the Laplace transform $s\mapsto\mathcal{L}_z(\chi)(s)$ is holomorphic on $\Cc$. If $\chi\in H^1(0,T)$, then $\chi$ is absolutely continuous on $[0,T]$, and an inversion formula holds~\cite{Bellman_Cooke}. Thus, since  $\chi\not=0$ (by assumption), we have $\mathcal{L}_z(\chi)\not=0$. By analycity, $\mathcal{F}_x(\varphi')$ and $\mathcal{L}_z(\chi)$ have isolated roots. The conclusion follows from~\eqref{remT}. 
\end{proof}
When $R^\Lambda$ is defined by a numerical integration of the form~\eqref{RLambda2}, with nodes $1\le \lambda_1<\cdots<\lambda_{K^\star}\le \Lambda$, the maximum stepsize of the subdivision $(\lambda_k)$ is defined by
$$\delta \lambda_{max}=\max_{0\le k\le K^\star}(\lambda_{k+1}-\lambda_k),$$
where we have set $\lambda_0=1$ and $\lambda_{K^\star+1}=\Lambda$. 
Recall that $V^h$, introduced in Section~\ref{subsec4.1},  is the finite dimensional subspace of $H$ obtained by the conforming $Q^1$ discretization. Let $v>0$ be fixed. We can state:
\begin{coro}
\label{coro5.3} If $R^\Lambda$ is defined by the integral formula~\eqref{RLambda1}, or by a numerical integration~\eqref{RLambda2}  where the maximum stepsize is taken sufficiently small, there exists a subspace $W^h\subset V^h$ which has a  dimension greater than or equal to $ \max\{N_x,N_z\}-1$ and such that $R^\Lambda(v,f)>0$ for all $f\in W^h\setminus \{0\}$.   
\end{coro}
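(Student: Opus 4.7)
My plan is to exploit Proposition~\ref{propaux} by restricting to tensor-product functions $f(x,z)=\varphi(x)\chi(z)$, for which the transform $T_f(v,\cdot)$ factors as in~\eqref{remT} and hence is real analytic and not identically zero on $\Rr$. The subspace $W^h$ will be built by fixing whichever of the two factors gives the smaller $Q^1$-space and letting the other factor vary freely.

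Concretely, assume first that $N_x\ge N_z$, so that $\max\{N_x,N_z\}-1=N_x-1$. I fix a non-zero $\chi_0\in H^1(0,T)$ in the span of the $z$-hat functions $b_0,\ldots,b_{N_z-1}$ (e.g.\ $\chi_0=b_0$, noting $\chi_0(T)=0$), and I set
$$W^h=\bigl\{\varphi(x)\chi_0(z)\ :\ \varphi\in\mathrm{span}\{a_1,\ldots,a_{N_x-1}\}\bigr\}.$$
Each product $\varphi\chi_0$ is a linear combination of the basis functions $e_i$ (via~\eqref{base}) and belongs to $H$ because $\varphi(\pm L/2)=0$ and $\chi_0(T)=0$, so $W^h\subset V^h$. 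The linear map $\varphi\mapsto\varphi\chi_0$ is injective since $\chi_0\neq 0$, hence $\dim W^h=N_x-1$. In the opposite case $N_z>N_x$, I symmetrically fix a non-zero $\varphi_0\in\mathrm{span}\{a_i\}$ and let $\chi$ range over $\mathrm{span}\{b_0,\ldots,b_{N_z-1}\}$, getting $\dim W^h=N_z\ge\max\{N_x,N_z\}-1$.

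For any $f=\varphi\chi_0\in W^h\setminus\{0\}$, both factors are non-zero and $\varphi\in H^1_0(-L/2,L/2)$, $\chi_0\in H^1(0,T)$, so Proposition~\ref{propaux} yields that $\lambda\mapsto T_f(v,\lambda)$ is real analytic on $\Rr$ and not identically zero; in particular its zeros in the compact set $[1,\Lambda]$ are finite. When $R^\Lambda$ is given by~\eqref{RLambda1}, the positive Lebesgue density $\lambda^2/\sqrt{\lambda^2-1}$ on $(1,\Lambda]$ forces $R^\Lambda(v,f)>0$ at once. In the numerical case~\eqref{RLambda2}, the previous paragraph shows that $Q_0(f):=\frac{4\rho gv}{\pi}\int_1^\Lambda|T_f(v,\lambda)|^2\frac{\lambda^2}{\sqrt{\lambda^2-1}}d\lambda$ is a continuous, positive quadratic form on the finite-dimensional space $W^h$; by compactness of the unit sphere $S\subset W^h$, it is bounded below by some $m>0$ on $S$. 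I then compare $Q_0$ to the quadrature sum $Q_\delta(f):=\frac{4\rho gv}{\pi}\sum_j\omega_j|T_f(v,\lambda_j)|^2$: the map $(f,\lambda)\mapsto|T_f(v,\lambda)|^2\frac{\lambda^2}{\sqrt{\lambda^2-1}}$ is jointly continuous on $S\times(1,\Lambda]$, so as $\delta\lambda_{\max}\to 0$ one has $Q_\delta\to Q_0$ uniformly on $S$, and thus $Q_\delta\ge m/2>0$ on $S$ once the stepsize is small enough. This yields $R^\Lambda(v,f)>0$ on $W^h\setminus\{0\}$.

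The main obstacle is the uniform convergence $Q_\delta\to Q_0$ near the endpoint $\lambda=1$, where the weight $\lambda^2/\sqrt{\lambda^2-1}$ has an integrable singularity and standard Riemann-sum error estimates fail. I would circumvent this by splitting $[1,\Lambda]=[1,1+\eta]\cup[1+\eta,\Lambda]$: on the first interval I bound both $Q_0$ and $Q_\delta$ by $C\sup_S\|T_f(v,\cdot)\|_\infty^2\cdot\int_1^{1+\eta}\frac{\lambda^2\,d\lambda}{\sqrt{\lambda^2-1}}=O(\eta^{1/2})$, so these contributions can be made arbitrarily small uniformly in $S$; on the complementary interval the integrand is uniformly continuous in both variables on the compact set $S\times[1+\eta,\Lambda]$, so a standard midpoint-rule estimate gives $|Q_\delta-Q_0|\le C(\eta)\delta\lambda_{\max}$ uniformly on $S$. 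Choosing $\eta$ small, then $\delta\lambda_{\max}$ small, completes the argument.
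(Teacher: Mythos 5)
Your construction of $W^h$ and your treatment of the exact-integral case coincide with the paper's: a tensor-product subspace $\varphi(x)\chi_0(z)$ with one factor frozen, an application of Proposition~\ref{propaux} to get a nonzero real-analytic $T_f(v,\cdot)$, and positivity of the density $\lambda^2/\sqrt{\lambda^2-1}$ to conclude $R^\Lambda(v,f)>0$. (The paper freezes the $z$-factor at the line $z_1=T-\delta z$ rather than $z=0$, which is immaterial.) That part is fine.

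The quadrature case is where you diverge, and there is a genuine gap. Your argument hinges on the uniform convergence $Q_\delta\to Q_0$ on the unit sphere of $W^h$, i.e.\ on the quadrature being \emph{consistent}: the weights $\omega_j$ must approximate $\int\lambda^2(\lambda^2-1)^{-1/2}\,d\lambda$ over the corresponding subintervals. But the hypotheses of Corollary~\ref{coro5.3} (via~\eqref{RLambda2}) only give you \emph{positive} weights and nodes whose maximum stepsize is small; nothing constrains the sizes of the $\omega_j$, so ``standard midpoint-rule estimates'' are not available and $Q_\delta$ need not converge to $Q_0$ at all (take all $\omega_j=10^{-100}$, say). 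The consistency assumption is also unnecessary: since the $\omega_j$ are positive, $R^\Lambda(v,f)=0$ is \emph{equivalent} to $T_f(v,\lambda_j)=0$ at every node, so the weights play no role beyond their sign. The paper exploits exactly this. Arguing by contradiction, it takes subdivisions with $\delta\lambda_{\max}^n\to 0$ and functions $f^n\in W^h\setminus\{0\}$ whose transforms vanish at all nodes, normalizes the coefficient vectors $\alpha^n$ in the sup-norm, extracts a convergent subsequence $\alpha^n\to\alpha\neq 0$, and passes to the limit: the density of the nodes forces $T_f(v,\cdot)\equiv 0$ on $[1,\Lambda]$ for the nonzero limit $f$, contradicting Proposition~\ref{propaux}. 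This compactness argument sidesteps both the consistency issue and the endpoint singularity at $\lambda=1$ that you had to handle separately; you should replace your uniform-convergence step by it (or else explicitly add a consistency hypothesis on the weights, which would weaken the statement).
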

\begin{proof}We assume that $N_x\ge N_z$ (otherwise we exchange the roles of $x$ and $z$). We also assume (by changing the indexing if needed) that the hat-functions $e_1,\ldots,e_{N_x-1}$ are associated to   the first line of interior nodes $(x_i,z_1)$ with $x_i=-L/2+i\delta x$ ($i=1,\ldots,N_x-1$), $z_1=T-\delta z$. Every $e_i$ can be written 
\begin{equation}
\label{eigh}
e_i(x,z)=\varphi_i(x)\chi_1(z)
\end{equation}
where
$$\varphi_i(x)=\hat{\varphi}\left(\frac{x-x_i}{\delta x}\right),\quad \chi_1(z)=\hat{\varphi}\left(\frac{z-z_1}{\delta z}\right),\quad\hat{\varphi}(s)=\begin{cases}
1+s&\mbox{if }s\in[-1,0],\\
1-s&\mbox{if }s\in[0,1],\\
0&\mbox{otherwise}.
\end{cases}$$
Let $W^h$ be the subspace of $V^h$ generated by $\{e_1,\ldots,e_{N_x-1}\}$, and let $f\in W^h\setminus\{0\}$, i.e. $f(x,z)=\sum_{i=1}^{N_x-1}\alpha_ie_i(x,z)$ with $(\alpha_1,\ldots,\alpha_{N_x-1})\not=(0,\ldots,0)$. By~\eqref{eigh}, 
\begin{equation}
\label{fproduit}
f(x,z)=\left(\sum_{i=1}^{N_x-1}\alpha_i\varphi_i(x)\right)\chi_1(z)=\varphi(x)\chi_1(z),
\end{equation}
where $\varphi\in H^1_0(-L/2,L/2)$, $\chi_1\in H^1(0,T)$. 
Using Proposition~\ref{propaux}, we see that $\lambda\mapsto T_f(v,\lambda)$ is real analytic on $\Rr$ and not identically zero. Thus, if $R^\Lambda$ is defined by the integral formula~\eqref{RLambda1}, $R^\Lambda(v,f)>0$.

 Next, assume that  $R^\Lambda$ is defined by an numerical integration such as~\eqref{RLambda2}. We claim that if the maximum stepsize is sufficiently small, then $R^\Lambda(v,f)>0$ for all $f\in W^h\setminus\{0\}$. Otherwise, there exist a sequence of subdivisions $1\le \lambda_1^n<\cdots<\lambda_{K^n}^n\le \Lambda$ with maximum stepsize $\delta\lambda_{max}^n\to 0$ and $f^n=\sum_{i=1}^{N_x-1}\alpha_i^ne_i\in W^h\setminus\{0\}$ such that 
\begin{equation}
\label{eqaux10}
R^\Lambda(v,f^n)=0\iff T_{f^n}(v,\lambda_k^n)=0\ \forall k\in\{1,\ldots,K^n\}.
\end{equation}
Denote $\alpha^n=(\alpha_1^n,\ldots,\alpha_{N_x-1}^n)$, and 
$$\|\alpha^n\|_\infty=\max_{1\le i\le N_x-1}|\alpha_i^n|.$$
Replacing $\alpha^n$ by $\alpha^n/\|\alpha^n\|_\infty$ if necessary, we may assume  that $\|\alpha^n\|_\infty=1$.
  Thus, up to a subsequence, $\alpha^n\to \alpha$  in $\Rr^{N_x-1}$, with $\|\alpha\|_\infty=1$. The sequence of functions $f^n$ tends in $W^h$ to a function $f=\sum_{i=1}^{N_x-1}\alpha_ie_i\not=0$, which can be represented as in~\eqref{fproduit}. Using Proposition~\ref{propaux} again, we obtain that $\lambda\mapsto T_f(v,\lambda)$ is an analytic function with isolated zeros in $[1,\Lambda]$. On the other hand, passing to the limit in~\eqref{eqaux10} shows that $\lambda\mapsto T_f(v,\lambda)$ is identically equal to $0$ on $[1,\Lambda]$, yielding a contradiction. The claim is proved. 
\end{proof}

\subsection{Optimization with respect to the wave and viscous drag resistance}
In this section we examine the influence of the  velocity on the optimization problem~\eqref{eqres-1} for:
\begin{equation}
\epsilon = \frac{1}{2} \rho C_w U^2 \, ,
\end{equation}
with a fixed value for the effective viscous drag coefficient: $C_w= 10^{-2}$, which is a rather realistic value when considering a streamlined body. Note that all the results described below depend on the choice $C_w$, and the bounds of the different regimes described with respect to the Froude number may be affected if $C_w$ is changed.
When the Froude number (see~\eqref{defFroude}) is large, or when the Froude number is low (in our case $Fr\leq 0.1$ or $Fr\geq2$) we observe that the optimized shapes we obtain are very similar, and seem to essentially minimize the surface area of the hull (see Figure~\ref{res-3}). For large Froude numbers, the reason is that the wave resistance (which goes to $0$ as $Fr$ goes to infinity) is significantly smaller than the viscous resistance, and hence the optimal hull is close to the optimal hull for the viscous drag resistance, which depends mainly on the surface area and $Fr^2$. For low Froude numbers, the reason is not so clear, but in this case, our theoretical resistance is not a good approximation of the real resistance, due to the limitations of Michell's wave resistance at low Froude numbers~\cite{Gotman}).   
\begin{center}
\begin{figure}[!ht]
\includegraphics[height=5.5cm]{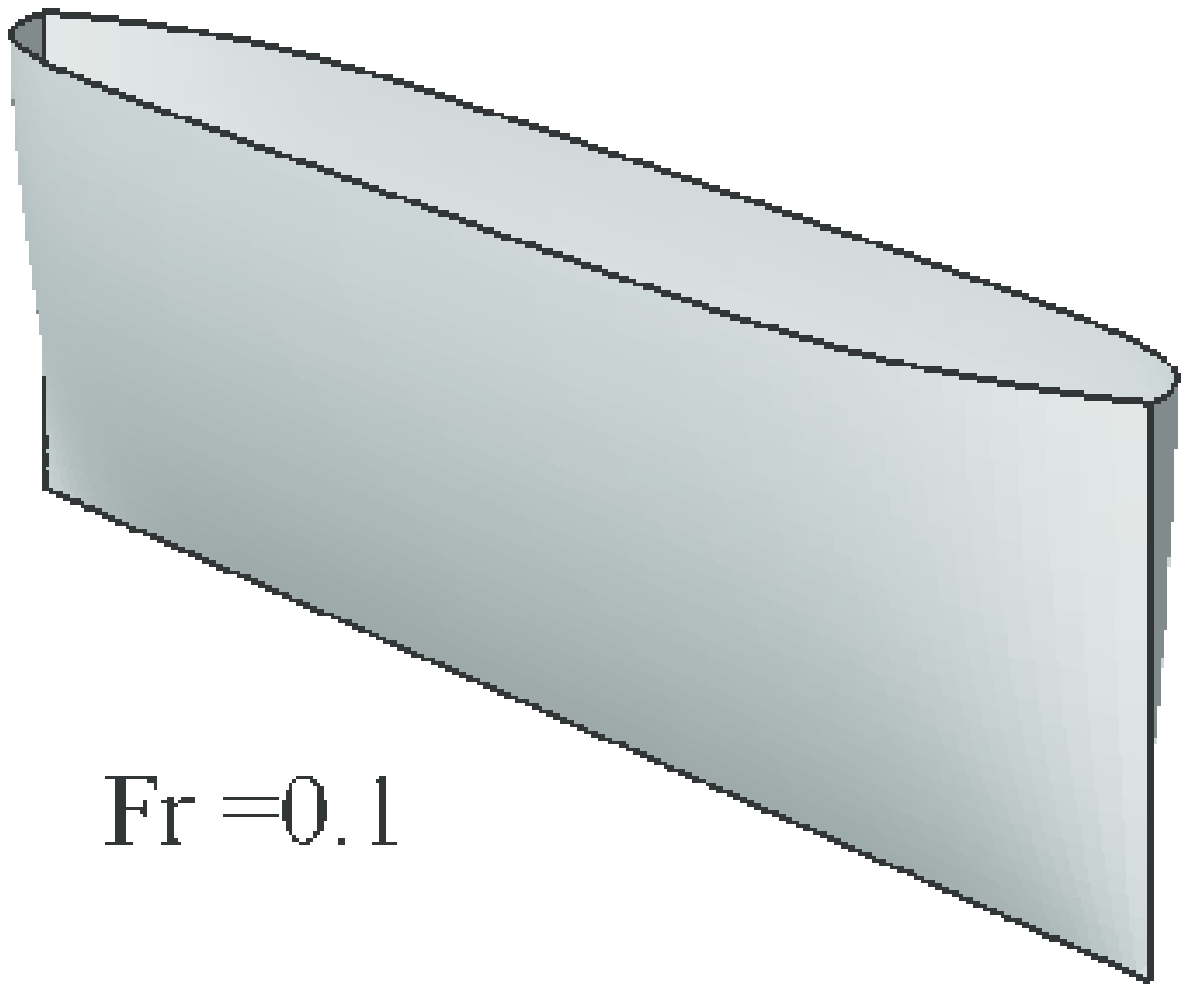}
\includegraphics[height=5.5cm]{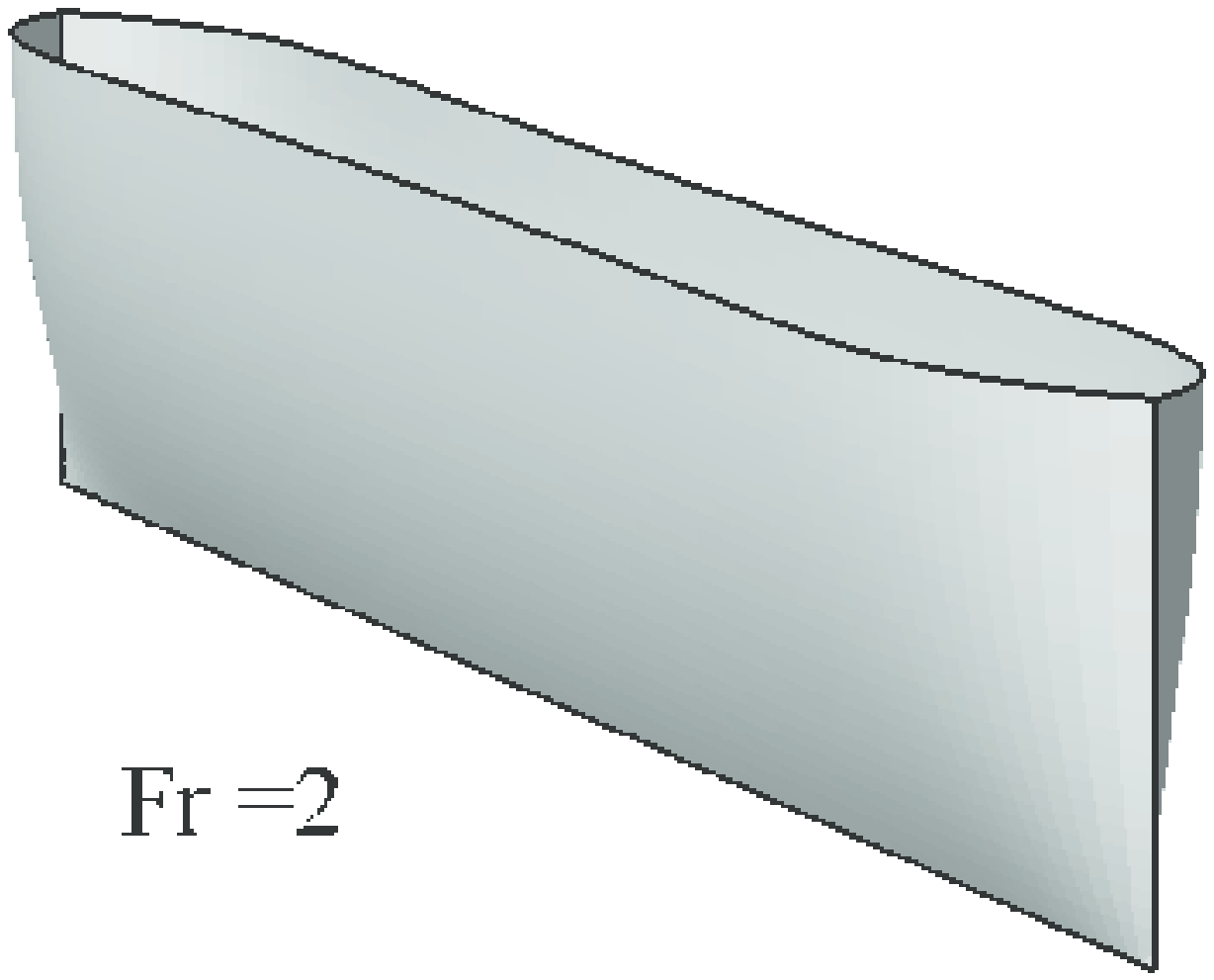} \\
\includegraphics[height=4.5cm]{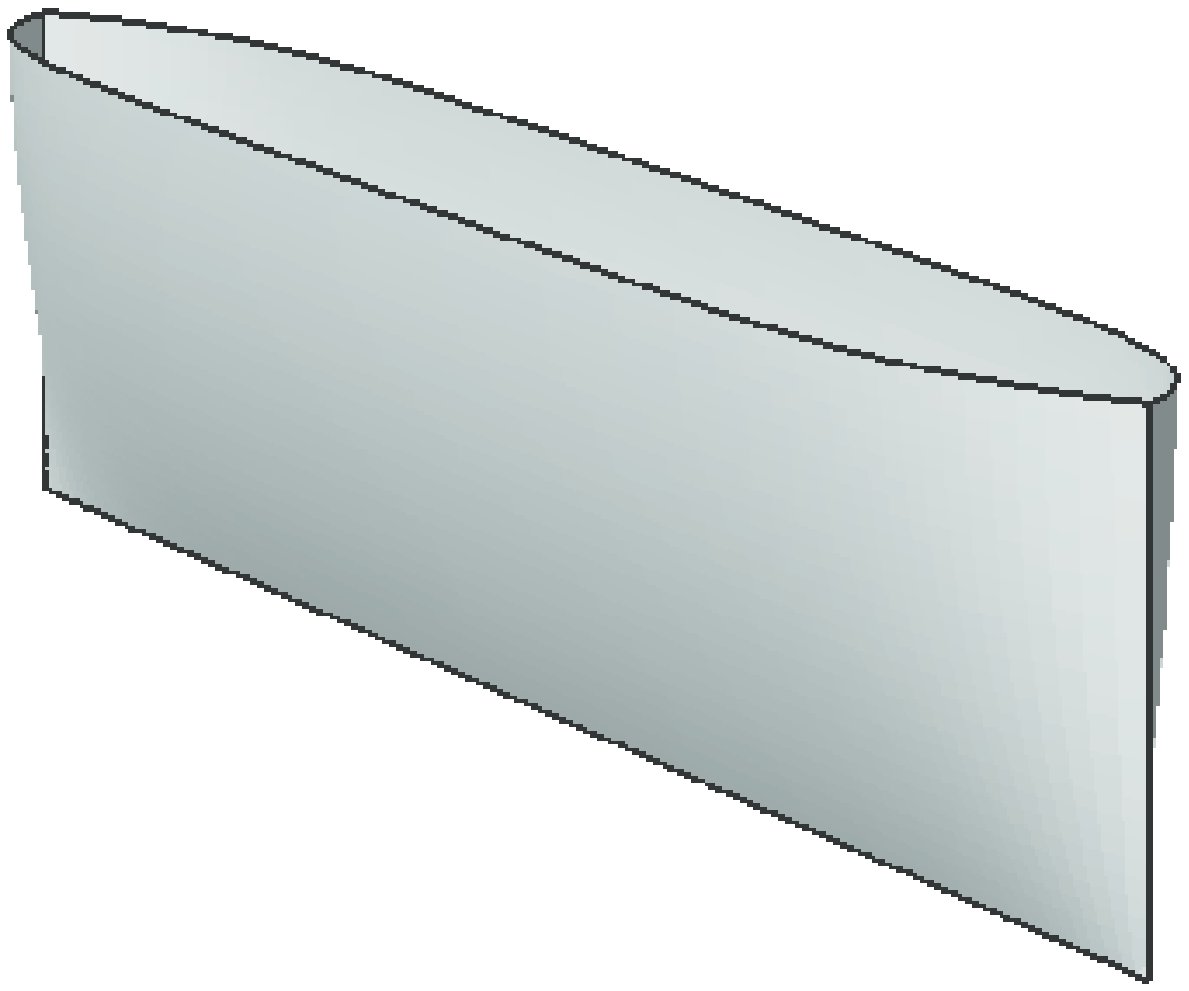} 
\caption{\label{res-3} Top row: ship hull optimization for high and low Froude numbers. Bottom figure: ship hull optimization without wave resistance (optimization of the viscous drag).}
\end{figure}
\end{center}
 
In the intermediate regimes (here $Fr \in [0.1, 1]$) in which the wave resistance is non-negligible, we observe various hull shapes depending on the length Froude number (see Figure~\ref{res-4}). Here, for $Fr$ close to $0.6$ we observe that the optimal hull features a bulbous bow, very similar to the ones that are usually designed for large sea ships~\cite{Inui}. For $Fr\in [1,2]$ the optimized hull varies continuously from a form presenting a small bulbous bow to a shape where the wave resistance is negligible.  
\begin{center}
\begin{figure}[!ht]
\includegraphics[height=5.3cm]{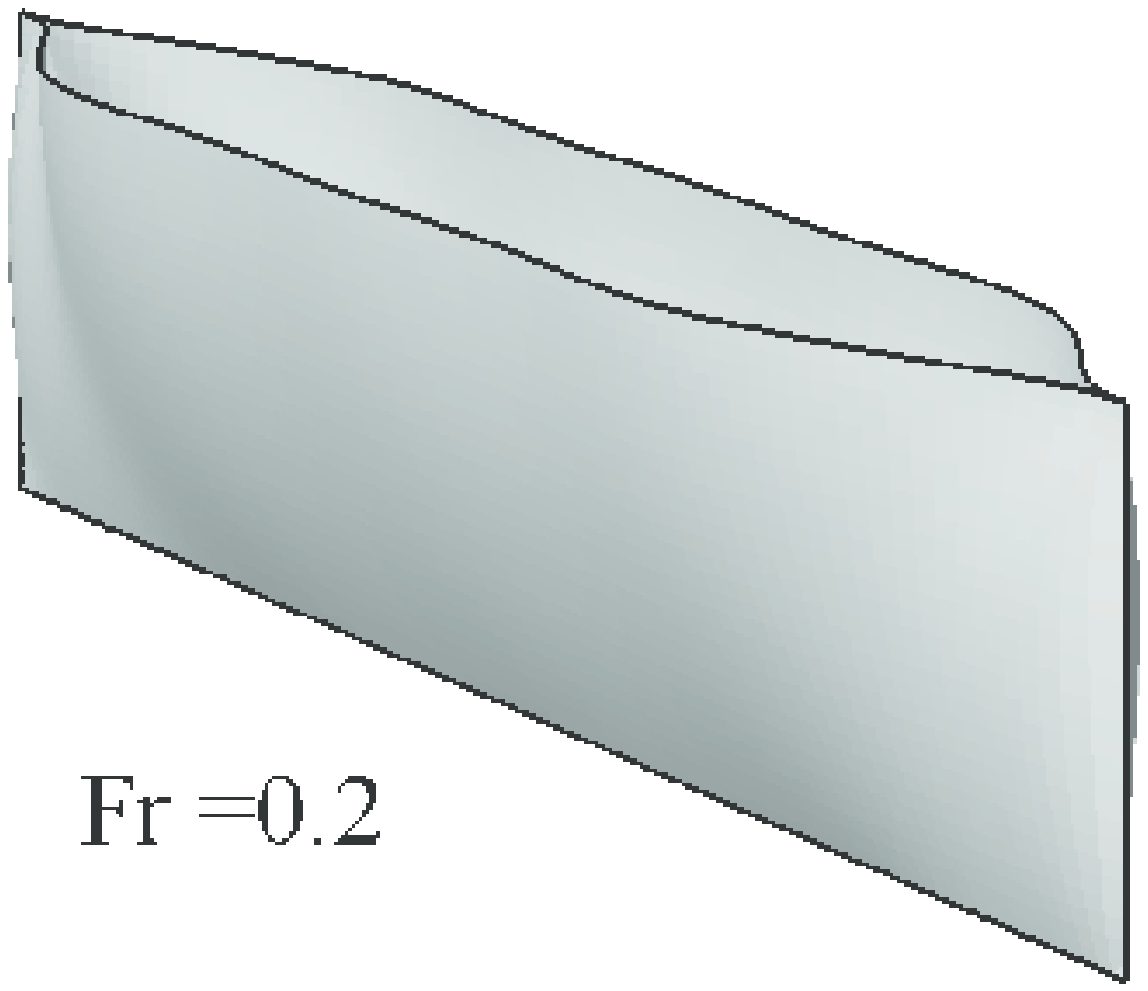}
\includegraphics[height=5.3cm]{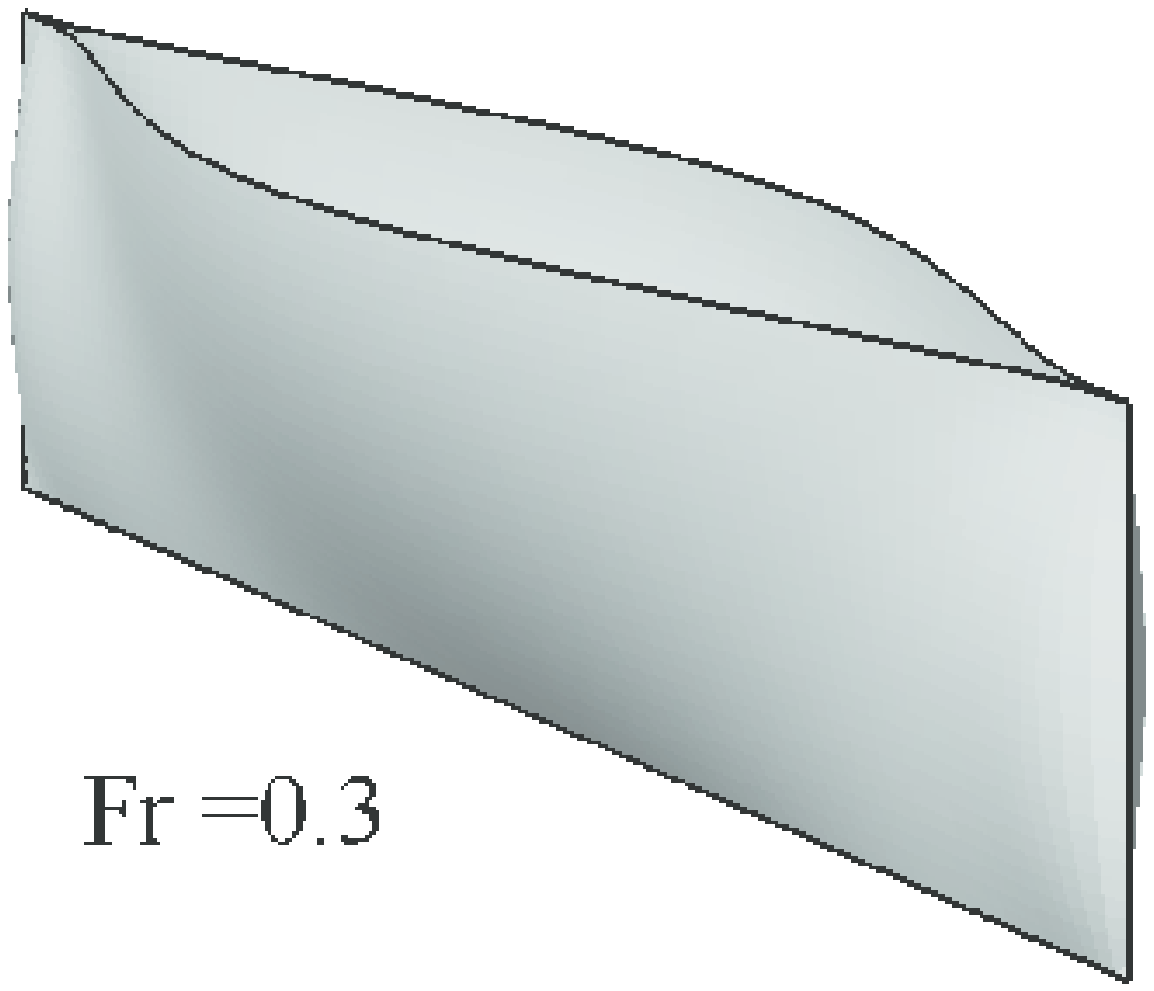}
\includegraphics[height=5.3cm]{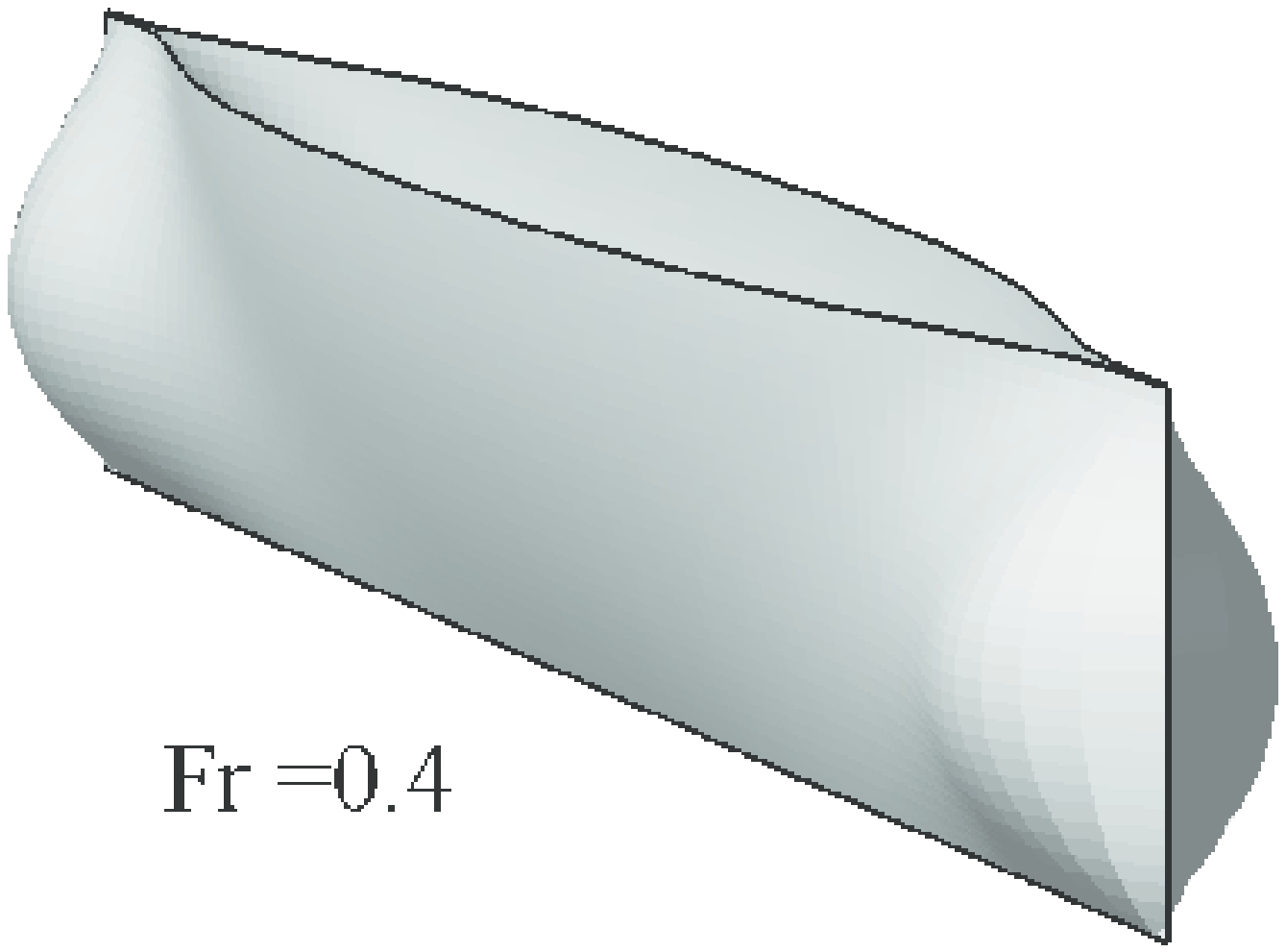}
\includegraphics[height=5.3cm]{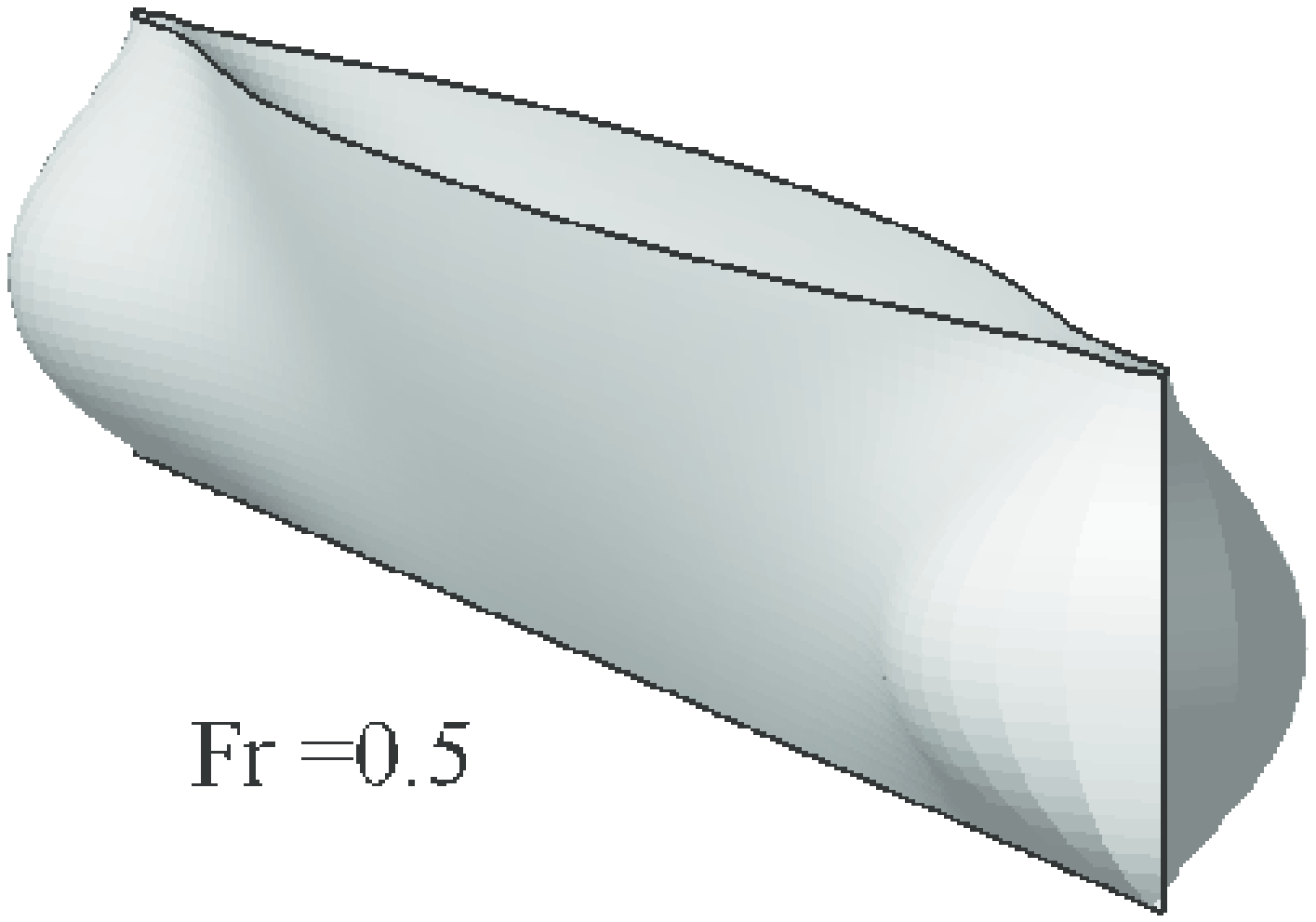}
\includegraphics[height=5.3cm]{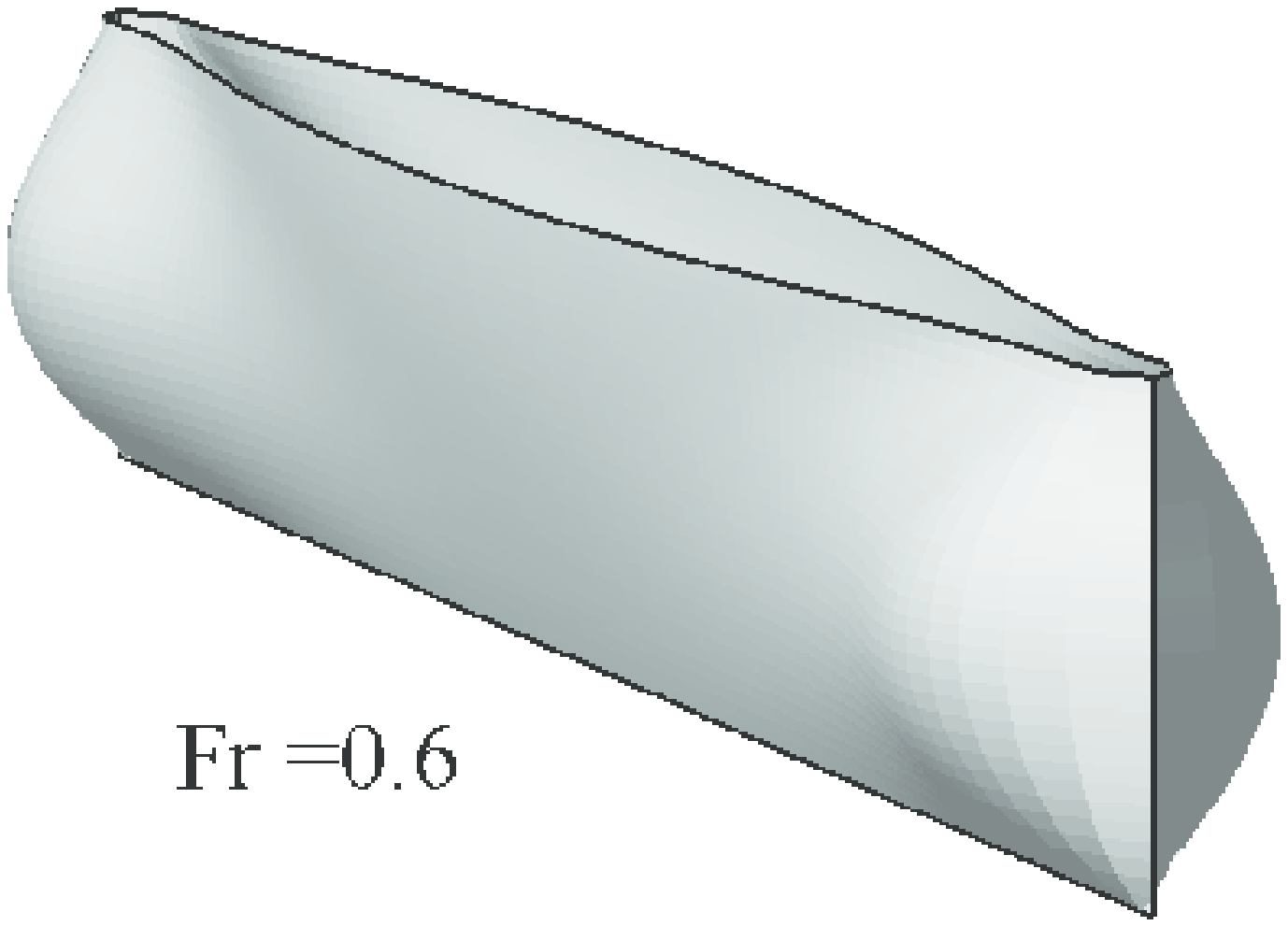}
\includegraphics[height=5.3cm]{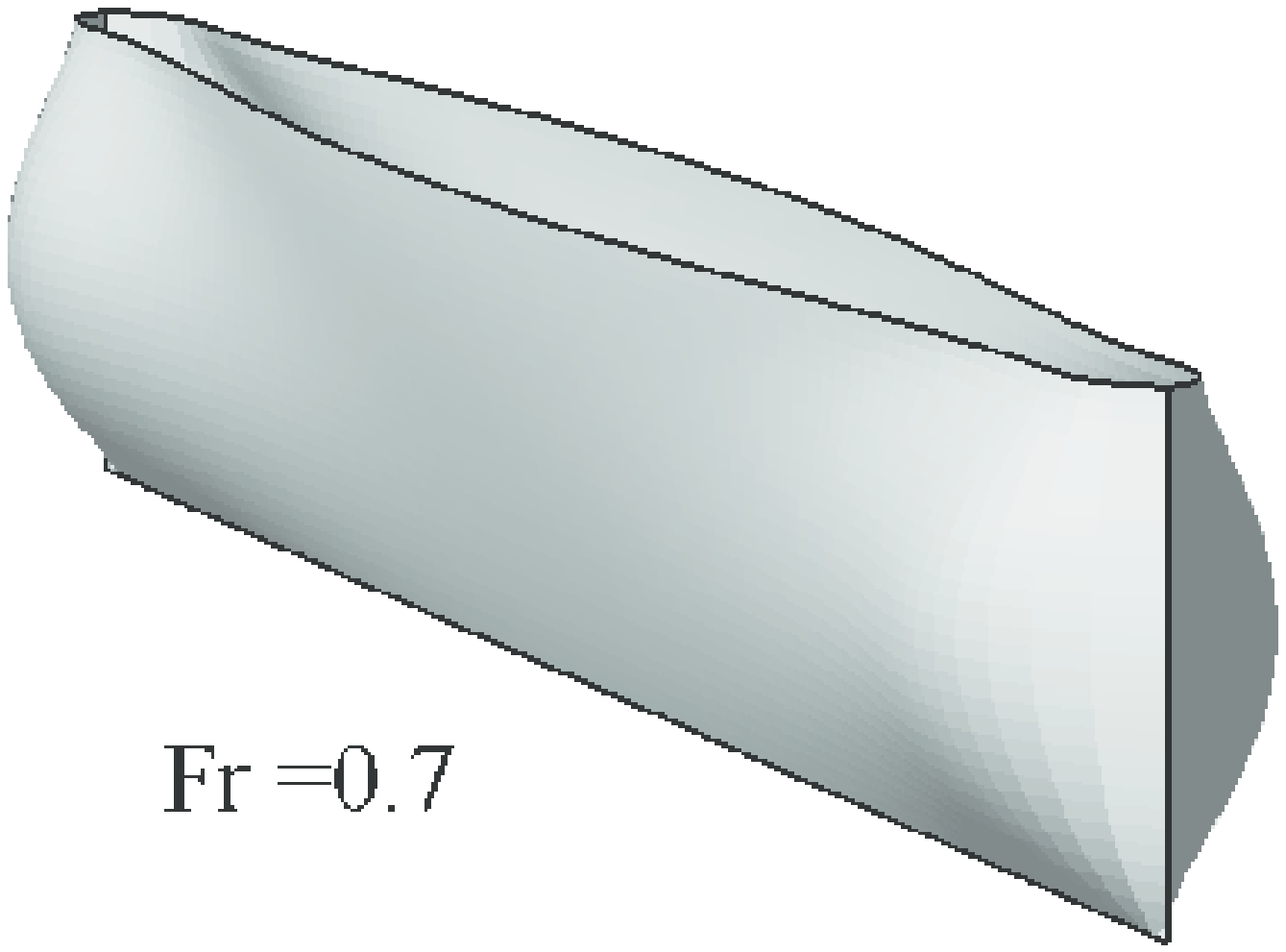}
\caption{\label{res-4}Ship hull optimization for moderate Froude numbers.}
\end{figure}
\end{center}
Note that this bulbous bow appears for Froude numbers values that usually produce the largest wave resistance for a standard hull such as the Wigley hull (see Figure~\ref{res-5}, plain line). In Figures~\ref{res-5a}-\ref{res-5},   we observe that the optimized hull for a given velocity is not optimal for every velocities. A Wigley hull can be a better solution for some values of $Fr$. For the comparison,  we have used here a Wigley hull with a parabolic cross section, i.e. 
$$f(x,z)=\frac{B}{2}(1-\frac{4x^2}{L^2})(1-\frac{z^2}{T^2}),$$
where $B$ is such that
$$V=\int_{-L/2}^{L/2}\int_0^Tf(x,z)dxdz=\frac{2}{9}BLT=0.03\mathrm{m}^3.$$ 
\begin{center}
\begin{figure}[!ht]
\includegraphics[width=10cm]{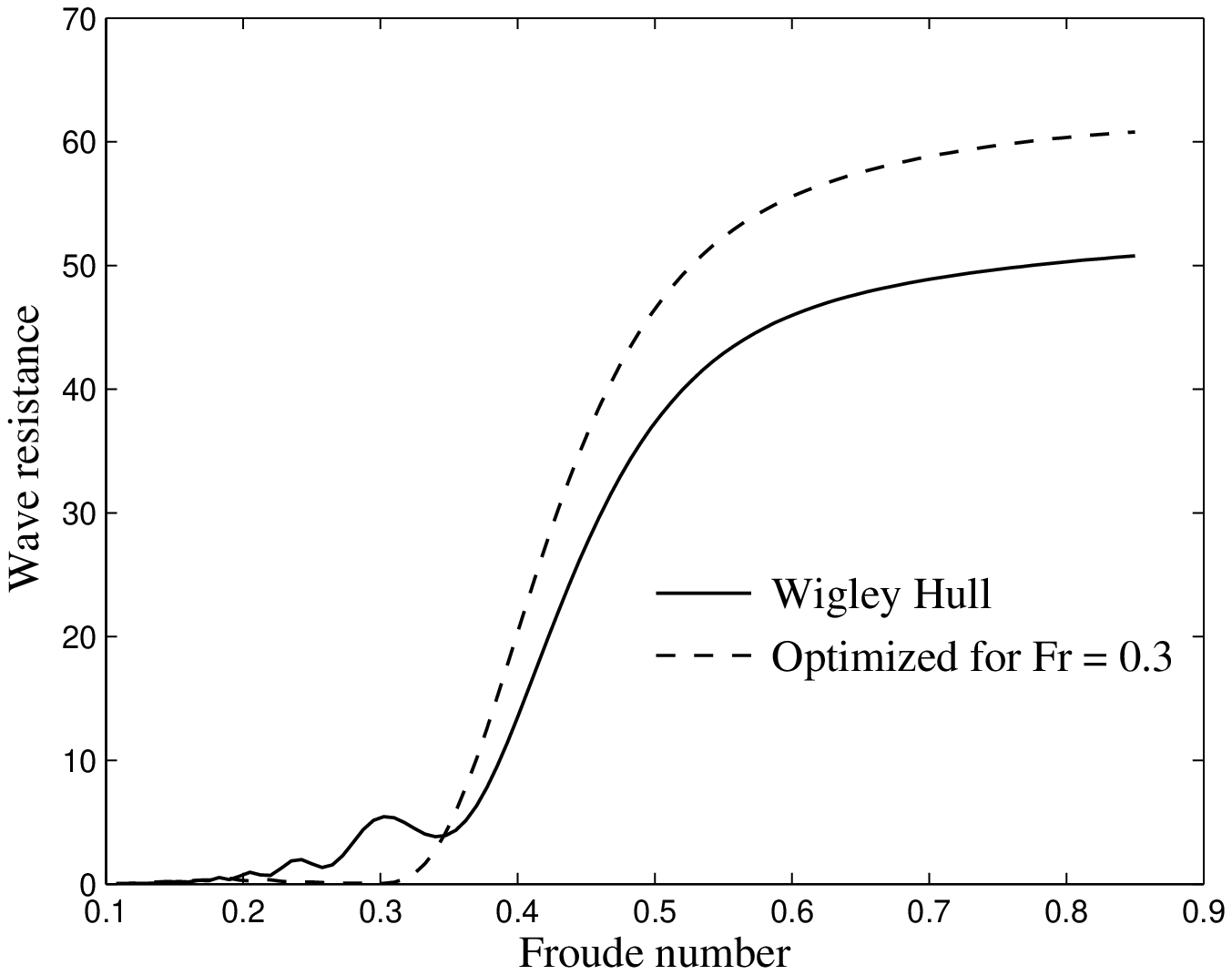}
\caption{\label{res-5a}Comparison with a Wigley hull}
\end{figure}
\end{center}
\begin{center}
\begin{figure}[!ht]
\includegraphics[width=10cm]{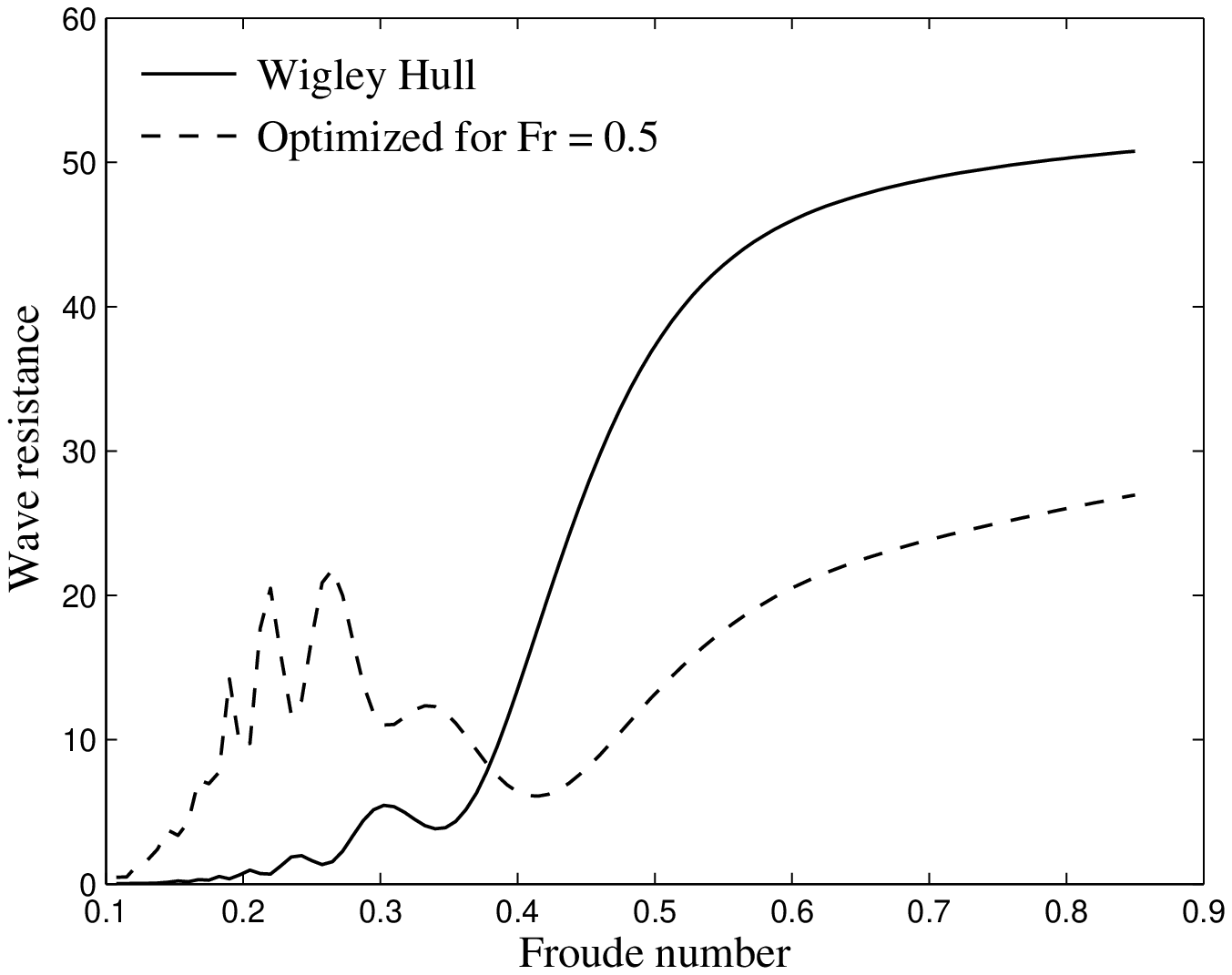}
\caption{\label{res-5}Comparison with a Wigley hull}
\end{figure}
\end{center}

\section{Conclusion and perspectives}
In this paper we presented both a theoretical and numerical framework for the optimization of ship hull in the case of unrestricted water, in which the Mitchell's integral is valid for the prediction of the wave resistance. We have shown the well-posedness of the problem when adding a regularising term that can be interpreted physically as a model of viscous resistance. Some numerical calculations have shown some features predicted in the theoretical work such as the most-likely ill-posedness of the optimization problem when considering only the wave resistance as our objective function and the fact that one could reduce the number of degrees of freedom in our problem by working on the (smaller) space of hulls that produce a non-zero wave resistance (although an expression of a basis of this space seems a non-trivial). Further numerical calculations have shown some common features of ship design such a the use of a bulbous bow to reduce the wave resistance. \\

\section*{Acknowledgements}
The authors are thankful to the ``Action Concert\'ee Incitative: R\'esistance de vagues (2013-2014) of the University of Poitiers and to the ``Mission Interdisciplinaire  of the CNRS (2013)'' for financial support. The authors also acknowledge the group ``Phydromat'' for stimulating discussions.

{}

\end{document}